\newcommand{\uu}{  \bu \SCAL \bu } 
\newcommand{\uuDG}{ \overline{ \bu \SCAL \bu } } 
\newcommand{\uuDGn}{  \overline{ \bu^n \SCAL \bu^{n+1} } } 
\newcommand{\uum}{  \bu^{n+1/2} \SCAL \bu^{n+1/2} } 
\newcommand{\uumm}{  \bu^{n} \SCAL \bu^{n+1} }
\theoremstyle{plain} 
\newtheorem{theorem}{Theorem}
\numberwithin{theorem}{section}
\numberwithin{lemma}{section}
\newtheorem{proposition}{Proposition}
\numberwithin{proposition}{section}
\theoremstyle{definition}
\newtheorem{remark}{Remark}
\numberwithin{remark}{section}
\theoremstyle{definition}
\numberwithin{definition}{section}
\theoremstyle{definition}
\numberwithin{example}{section}
\mathchardef\ordinarycolon\mathcode`\:
\let\cite=\citet
\newcommand{\commentA}[1]{\textcolor{black}{#1}}
\newcommand{\commentB}[1]{\textcolor{black}{#1}}
\newcommand{\commentC}[1]{\textcolor{black}{#1}}
\newcommand{\commentD}[1]{\textcolor{black}{#1}}
\newcommand{\commentE}[1]{\textcolor{black}{#1}}
\newcommand{\new}[1]{\textcolor{black}{#1}}
\newcommand{\neww}[1]{\textcolor{black}{#1}}
\journal{Journal of Computational Physics}
\begin{document}


\date{}
\begin{frontmatter}
\title{A fully conservative and shift-invariant formulation for Galerkin discretizations of incompressible variable density flow}
\author[1]{Lukas Lundgren\corref{cor1}}
\ead{lukas.lundgren@math.su.se}
\author[1]{Murtazo Nazarov}
\ead{murtazo.nazarov@uu.se}
\cortext[cor1]{Corresponding author}
\address[1]{Department of Information Technology, Division of Scientific Computing, Uppsala University, Sweden}
\begin{abstract}
  This paper introduces a formulation of the variable density incompressible Navier-Stokes equations by modifying the nonlinear terms in a consistent way. For Galerkin discretizations, the formulation \neww{leads to favorable discrete conservation properties} without the divergence-free constraint being strongly enforced. In addition, the formulation is shown to make the density field invariant to global shifts. The effect of viscous regularizations on conservation properties is also investigated. Numerical tests validate the theory developed in this work. \new{The new formulation shows superior performance compared to other formulations from the literature, both in terms of accuracy for smooth problems and in terms of robustness.}

\end{abstract}
\begin{keyword} 
  incompressible variable density flow, Navier-Stokes equations, conservation, EMAC formulation,   viscous regularization, \new{structure-preserving discretization}
\end{keyword}
\end{frontmatter}









\section{Introduction.} 

The simulation of incompressible flow is important for industrial and scientific applications when modeling subsonic flow. In this work, we consider incompressible flow when the density field is variable, which \commentB{arises when multiple fluids interact. A pair of fluids can remain immiscible as seen in oil/water, gas/liquids and so on. Alternatively, fluids can diffuse and mix, as observed in pairs of gases and certain pairs of liquids such as salt water/fresh water, alcohol/water and more. These phenomena can be modeled using the incompressible Navier-Stokes equations with variable density, along with an advection equation for the density field. Complexity is often added to the model by introducing additional terms to account for factors such as gravity, chemical reactions and surface tension. At a continuous level, when viscosity effects are neglected,} these equations can be shown to conserve mass, squared density, momentum, angular momentum and kinetic energy. The density field can also be shown to be shift-invariant which means that density remains invariant if it is shifted by a constant. These properties are fundamental to the flow model and are closely linked to the so-called divergence-free constraint which is pointwise satisfied. It is favorable for numerical approximations of variable density flow to fulfill as many of these physical properties as possible. 

Unfortunately, numerical approximations of the incompressible Navier-Stokes equations are rarely pointwise divergence-free \citep{Lundgren2022} which leads to issues with conservation. In particular, most Galerkin methods are known to satisfy the divergence-free condition only weakly and much work has been performed to improve the conservation properties of these methods. Recently, \cite{Charnyi2017} showed that, by consistently modifying the nonlinear term in the constant density Navier-Stokes equations, a fully conservative Galerkin method could be derived. The novelty of the formulation is that a modified pressure is solved for and the formulation has been named the energy, momentum and angular momentum conserving (EMAC) formulation. The EMAC formulation has later been extended to be more efficient by \cite{Charnyi2019}, further analyzed by \cite{Olshanskii2020} and extended to projection methods by \cite{Ingimarson2023}. 

Unlike constant density flow, variable density flow has additional conserved properties and no fully conservative formulation is currently known in the literature. The current state-of-the-art formulations for variable density flow are based on skew-symmetric formulations that can be shown to conserve squared density and kinetic energy \citep{Guermond2000} or a formulation that can be shown to conserve mass, kinetic energy, momentum and angular momentum \citep{Zhang_2023,Manzanero_2020}. In this work, we extend the modified pressure technique \citep{Charnyi2017} to variable density flow which leads to a new formulation that, when discretized by a Galerkin method, \neww{is shift-invariant and also conserves mass, squared density, momentum and angular momentum. We show that the leading error term for discrete kinetic energy conservation is independent of the divergence-free constraint. The modified Galerkin method by \citep{Gawlik2020} is also shift-invariant but instead leads to conservation of mass, squared density and kinetic energy with a leading error term for discrete momentum and angular momentum conservation independent of the divergence-free constraint.} The analysis is first performed on the inviscid problem, \ie the incompressible Euler equations with variable density. Later, since viscous regularizations affect the conservation properties of the model, we also investigate this. 

The theoretical findings are summarized in Theorems \ref{theorem div} and \ref{theorem:momentum-viscous-flux}, and these are verified using numerical experiments. The results show that the new formulation leads to improved accuracy and robustness compared to existing formulations in the variable density literature. Furthermore, the new formulation can easily be implemented into existing Galerkin-based variable density Navier-Stokes solvers. 



The paper is organized the following way: In Section \ref{Sec:prelim} we give the governing equations, present our notation and explain the properties of the governing equations that we later want our proposed numerical method to mimic. Then, in Section \ref{Sec:div_neq_properties} we introduce an alternative formulation of the governing equations which, when discretized using a Galerkin method, leads to improved conservation properties in the semi-discrete form. In Section \ref{Sec:viscous_regularization} we present a family of viscous regularizations that lead to various conserved properties of the model. In Section \ref{Sec:fully_discrete} a conservative second-order accurate time discretization is presented which leads to a fully discrete method. In Section \ref{Sec:gresho_sections} we perform numerical validations. In Section \ref{Sec:conclusion} we give concluding remarks.

\section{Preliminaries.}\label{Sec:prelim}
In this section, we introduce the governing equations that model variable density flow. We start with the inviscid case, \ie we only consider the incompressible Euler equations with variable density. We also discuss the properties of the governing equations and present our notation.

\subsection{The inviscid model problem.}\label{Sec:Equation}
We first consider the incompressible Euler equations with variable density in a domain $\Omega \subset \mb R^d$ where $d$ is the dimension and with finite time interval $[0, T]$
\begin{equation}\label{eq:cons_law_primitive}
      \begin{aligned}
        \p_t \rho + \bu \SCAL \GRAD \rho  & = 0,\\
 \p_t \bbm + \bu \SCAL \GRAD  \bbm  + \GRAD p  &= \bef, \quad &(\bx,t) \in  \Omega \CROSS (0,T],\\
           \DIV \bu &= 0,\\
           \bu(\bx,0) &= \bu_{0}(\bx), \quad  &\bx\in \Omega,\\
           \rho(\bx,0) &= \rho_{0}(\bx),    
      \end{aligned}
\end{equation}
where the density $\rho > 0$, the velocity field $\bu$ and the pressure $p$ are the unknowns. We define $\bbm := \rho \bu$ as the momentum vector, $\bef(\bx,t)$ represents an external force, $\rho_{0}(\bx)$, $\bu_{0}(\bx)$ are initial conditions for density and velocity. We assume that the governing equations are supplied with well-posed boundary conditions. To simplify the analysis we set $\bef = 0$ and consider periodic boundary conditions. 

\begin{remark}
It is possible to write the governing equations in primitive form, \ie $\rho \new{\p_t} \bu + \bbm \SCAL \GRAD \bu$ instead of $\new{\p_t} \bbm + \bu \SCAL \GRAD \bbm$. All of the results presented in this manuscript can straightforwardly be extended to this form. See \ref{Sec:alternative_formulations} for more details. 
\end{remark}

\subsection{Notation and useful identities.} 
Let $\rho, w \in H^1(\Omega)$ and $\bu, \bw, \bv \in \bH^1(\Omega) := [H^1(\Omega)]^d$. \commentA{We note that $ \rho \bu = \bbm \in [H^1(\Omega)]^d$.} In this work the \commentE{$L^2$-inner product is written as $( \SCAL , \SCAL )$ with associated $L^2$-norm} $\| \cdot \|$ and we define $(\GRAD \bu) := \partial_{x_i} u_j $ \commentB{and $(\bu \otimes \bv)_{ij} = u_i v_j$}. We also define a trilinear form

\begin{equation} \label{eq:trilinear definition}
b( \bu, \bv, \bw) :=  (\bu \SCAL \GRAD \bv, \bw) = \l( ( \GRAD \bv)^\top \bu, \bw \r) = ( (\GRAD \bv) \bw , \bu).
\end{equation}









The following identities hold: 
\begin{align}
 &b(\bu , \bbm, \bv)  =b(\bbm, \bu, \bv) + (\bu \SCAL \GRAD \rho ,  \bu \SCAL \bv ), \label{eq:mom_ident_1} \\
 &b(\bu, \bu, \bbm) = b(\bbm, \bu, \bu),  \label{eq:move_rho}
\end{align}
where \eqref{eq:mom_ident_1} follows from $\bu \SCAL \GRAD ( \rho \bu)  = \rho \bu \SCAL \GRAD \bu + (\bu \SCAL \GRAD \rho) \bu $ and \eqref{eq:move_rho} follows from the definition of the trillinear form \eqref{eq:trilinear definition}. 

Since the domain is periodic the following relations hold due to integration by parts \noeqref{eq:advection_IBP1}
\begin{align}
\label{eq:IBP1} &b ( \bu , \bv, \bw ) = - b( \bu, \bw , \bv ) - ( (\DIV \bu) \bv , \bw), \\
\label{eq:IBP2} &b ( \bu , \bw, \bw ) =  - \frac{1}{2} ( (\DIV \bu) \bw , \bw), \\
\label{eq:advection_IBP1} & ( \bu \SCAL \GRAD \rho , w ) = -( \bu \SCAL \GRAD w, \rho ) - ( ( \DIV \bu ) \rho , w) , \\
\label{eq:advection_IBP2} & ( \bu \SCAL \GRAD \rho , \rho ) = - \frac{1}{2} ( ( \DIV \bu ) \rho , \rho ) .
\end{align}

\commentA{We also define $\be_i$ to be a unit vector pointing in the $i$th cordinate direction}, \ie in 3D $\be_1 = (1, 0, 0)^\top$, $\be_2 = (0, 1, 0)^\top$ and $\be_3 = (0, 0, 1)^\top$. We define $\bphi_i \coloneqq  \bx \CROSS \be_i $ and note that $\bphi_i$ has the property that \commentA{$\DIV \bphi_i = 0$} and $ \GRAD \bphi_i + (\GRAD \bphi_i)^\top = 0 $. To get a correct definition of the cross product in 2D, the last component of all the vectors is extended by 0. \commentB{The contraction operator : , is defined as
\begin{equation}
A : B = \sum_{i,j = 1}^d A_{ij} B_{ij},
\end{equation}
for any two $d \times d$ matrices $A$ and $B$.}

\subsubsection{Finite element preliminaries.} \label{Sec:fem_approx}


We denote a shape regular computational mesh by $\mathcal{T}_h$ which is a triangulation of $\Omega$ into a finite number of disjoint elements $K$. \commentA{The finite element spaces we use for the density, velocity and pressure are respectively given by}
\begin{equation}
\begin{split}
 &\calM : = \{ w(\bx) : w \in \mathcal{C}^0(\Omega), w|_K \in \polP_{k_\rho}, \forall K \in \mathcal{T}_h  \}, \\ 
&{\bcalV} := \Big[ \{ v(\bx) : v \in \mathcal{C}^0( \Omega ), v|_K \in \polP_{k_\bu}, \forall K \in \mathcal{T}_h  \} \Big]^d, \\
&\calQ : = \l \{ q(\bx) : q \in \mathcal{C}^0( \Omega ), q|_K \in \polP_{k_P}, \forall K \in \mathcal{T}_h, \int_\Omega q \ud \bx = 0 \r \}, 
\end{split}
\end{equation}
where $\polP_{k_\rho},\polP_{k_\bu},\polP_{k_P}$ are the set of multivariate polynomials of total degree at most $k_\rho,k_\bu,k_P \geq 1$ defined over $K$. It is well-known that to satisfy the so-called inf-sup condition \citep{Girault_1986} we require $k_P < k_\bu$. The corresponding discrete inner products are defined as
\begin{equation} \label{eq:inner product}
\begin{split}
&(v, w) := \sum_{K \in \mathcal{T}_h} \int_K v  w  \ud \bx, \quad (\bv, \bw) := \sum_{K \in \mathcal{T}_h} \int_K \bv \cdot \bw  \ud \bx, \quad (\GRAD \bv, \GRAD \bw) := \sum_{K \in \mathcal{T}_h} \int_K \GRAD \bv : \GRAD \bw  \ud \bx, 
\end{split}
\end{equation}
with associated $L^2$ norms $\| \cdot \|$. Discrete and continuous inner products are used interchangeably for brevity. The discrete mesh size function $h(\bx)$ is constructed as the following continuous piecewise linear function: for every nodal point $N_i$
\begin{equation}
h(N_i) := \min \l( h_K / \max(\commentA{k_\bu}, k_\rho), \quad \forall K \in \mathcal{T}_h \text{ such that } N_i \in K \r)
\end{equation}
where $h_K$ is the smallest edge of $K$.

\subsection{Properties of the governing equations.} \label{Sec:div_u_properties}
The governing equations can be shown to conserve mass, squared density, kinetic energy, momentum and angular momentum. These are defined as

\begin{equation} \label{eq:conservation_properties}
\begin{alignedat}{6}
& \text{Mass}  && \int_{\Omega} \rho \ud \bx; \quad && \text{Kinetic energy} && \frac{1}{2} \int_{\Omega} \rho \bu \SCAL \bu \ud \bx;   \quad && \text{Angular momentum}  && \int_{\Omega}  \bbm \CROSS \bx \ud \bx;  \\ &  \text{Momentum} && \int_{\Omega}  \bbm \ud \bx;   \quad 
&&  \text{Squared density} \quad && \frac{1}{2} \int_{\Omega}  \rho^2  \ud \bx.  
\end{alignedat}
\end{equation}

Another property that is rarely (if at all) mentioned in the variable density flow literature is that the mass equation is invariant to shifts in the density field. \commentE{To be more precise, we say that the advection equation is shift-invariant since}
 \begin{equation} \label{eq:shift_invariance}
  \commentE{\p_t \rho + \bu \SCAL \GRAD \rho   = 0 \quad \Leftrightarrow \quad \p_t ( \rho + c) + \bu \SCAL \GRAD (  \rho + c )  = 0, \quad \forall c \in \mR \setminus \{  0 \}.}
 \end{equation}
 \commentE{On the other hand, the advection equation in conservative form is not shift-invariant since}
 \begin{equation} \label{eq:shift_invariance}
  \commentE{ \p_t \rho + \DIV ( \rho \bu  )   = 0 \quad \nLeftrightarrow \quad \p_t ( \rho + c) +  \DIV ( ( \rho + c) \bu  )  = 0, \quad \forall c \in \mR \setminus \{  0 \}.}
 \end{equation}

\section{Properties when  \texorpdfstring{$\DIV \bu \neq 0. $} s} \label{Sec:div_neq_properties}
For most numerical methods, $\DIV \bu = 0$ does not hold pointwise at the discrete level. This is also true for most Galerkin methods where the numerical solution usually satisfies
\begin{equation}
(\DIV \bu, q) = 0, \quad \forall q \in \commentB{\calQ}.
\end{equation}
We note that there are some exceptions to this. In particular, some exotic elements are pointwise divergence-free such as iso-geometric B-splines \citep{Evans2013}, Raviart-Thomas elements \citep{Raviart_Thomas, Gawlik2020}, Scott-Vogelius elements \citep{Scott1985} and so on, but some of these are not included in all FEM software libraries and some introduce constraints on the computational mesh and polynomial degree. Additionally, not all divergence-free elements, such as \citep{Raviart_Thomas, Gawlik2020}, are $H^1$-conforming which is a beneficial property to have.

In this work, we analyze many variations of the nonlinear terms. Depending on how these are chosen, favorable conservation properties can be obtained which we investigate thoroughly in this work. To this end, we write the governing equations \eqref{eq:cons_law_primitive} in a more general form
\begin{equation}\label{eq:generic_nse}
\begin{aligned}
        \p_t \rho + \bu \SCAL \GRAD \rho + \alpha_\rho (\DIV \bu) \rho  & = 0,\\
 \p_t \bbm + \bu \SCAL \GRAD  \bbm + \alpha_\bbm (\DIV \bu) \bbm + \GRAD P +  \alpha_P (\GRAD \bu) \bbm + \alpha_P (\GRAD \bbm) \bu  &= 0,  \\
\end{aligned}
\end{equation}
where $P = p - \alpha_P \bu \SCAL \bbm$ is a modified pressure and $\alpha_\rho, \alpha_\bbm, \alpha_P  \in \mR$. The terms involving $\DIV \bu$ are consistent with the governing equations and have previously been used for variable density flow in different variations to derive numerical methods, see \eg \citep{Guermond2000,Shen2007,Bartholomew_2019,almgren98,Zhang_2023}. The modified pressure technique ($\alpha_P \neq 0$) has been successfully applied for constant density flow \citep{Charnyi2017} but has currently not been applied in the variable density context. Depending on how $\alpha_\rho, \alpha_\bbm, \alpha_P$ are chosen different conservation properties are obtained. By using that $ (\GRAD \bbm) \bu = (\GRAD (\rho \bu)) \bu  = ( \GRAD \rho \otimes \bu ) \bu +  (\GRAD \bu) \bu \rho   =   \GRAD \rho  (\bu \SCAL \bu) +  (\GRAD \bu) \bbm  $, the system \eqref{eq:generic_nse} can equivalently be expressed as
\begin{equation}\label{eq:generic_nse_tricks}
\begin{aligned}
        \p_t \rho + \bu \SCAL \GRAD \rho + \alpha_\rho (\DIV \bu) \rho  & = 0,\\
 \p_t \bbm + \bu \SCAL \GRAD  \bbm + \alpha_\bbm (\DIV \bu) \bbm  + \GRAD P  +  \l( \alpha_P  + \frac{1}{2} \r) (\GRAD \bu) \bbm \\ + \l(  \alpha_P  - \frac{1}{2} \r) (\GRAD \bbm) \bu + \frac{1}{2} ( (\bu \SCAL \bu)  \GRAD \rho    - \alpha_\rho \GRAD (  \rho   \bu \SCAL \bu )  ) 
 + \frac{1}{2}     \alpha_\rho \GRAD (  \rho \bu \SCAL \bu )    &= 0,  \\
\end{aligned}
\end{equation}
Lastly, we introduce $\Bar{\rho} \in \mR $ which, if properly chosen, will make the system below shift-invariant
\begin{equation}\label{eq:generic_nse_bar}
\begin{aligned}
        \p_t \rho + \bu \SCAL \GRAD \rho + \alpha_\rho (\DIV \bu) (\rho - \Bar{\rho}) & = 0,\\
 \p_t \bbm + \bu \SCAL \GRAD  \bbm + \alpha_\bbm (\DIV \bu) \bbm  + \GRAD P  +  \l( \alpha_P  + \frac{1}{2} \r) (\GRAD \bu) \bbm \\ + \l(  \alpha_P  - \frac{1}{2} \r) (\GRAD \bbm) \bu + \frac{1}{2} (   \GRAD \rho  (\bu \SCAL \bu)   - \alpha_\rho \GRAD (  (\rho - \Bar{\rho} ) \bu \SCAL \bu )  ) 
 + \frac{1}{2}     \alpha_\rho \GRAD (  \rho \bu \SCAL \bu )    &= 0,  \\
\end{aligned}
\end{equation}
where $P = p - \alpha_P \rho \bu \SCAL \bu - \frac{1}{2} \alpha_\rho \Bar{\rho} \bu \SCAL \bu $ is a modified pressure which now includes $\Bar{\rho}$. To the best of the authors' knowledge, using $\Bar{\rho}$ to achieve shift-invariant formulations is new to the literature for both variable density flow and scalar transport problems in general.


\commentB{
\begin{remark}
In this work, we provide strong numerical evidence supporting that shift invariance is a valuable property that enhances both the robustness and accuracy of variable density flow simulations. \commentE{Ideally, we would expect a numerical method solving the advection equation to produce the same error for the initial conditions $\rho_0(\bx)$ and $\rho_0(\bx) + c$. However, if a numerical method for the advection equation is not shift-invariant this is not the case, \ie the method behaves artificially differently when the initial condition is shifted. In simpler terms, such a method would produce larger discretization errors in its results for the initial condition $\rho_0(\bx) + 100$ compared to $\rho_0(\bx)$.}
\end{remark}}

\subsection{Galerkin finite element approximation.}
The finite element approximation of \eqref{eq:generic_nse_bar} is derived by testing the mass equation with $w$, the momentum equations with $\bv$ and the divergence-free constraint with $q$. The finite element method reads: Find $(\rho, \bu, P) \in \commentE{ \calC^1( [0,T]; } \ \calM \CROSS \mathbf{\bcalV} \CROSS \calQ \commentE{)} $ such that
\begin{equation}\label{eq:mom_update_general}
\begin{alignedat}{2} 
( \new{\p_t} \rho , w) + (\bu \SCAL \GRAD \rho, w) + \alpha_\rho ((\DIV \bu) ( \rho - \Bar{\rho} ) , w) & = 0, \quad && \forall w \in \calM, \\
( \new{\p_t} \bbm , \bv)  + b( \bu, \bbm, \bv ) + \alpha_\bbm ( ( \DIV \bu ) \bbm, \bv) + (\GRAD P, \bv) \\   + \l( \alpha_P - \frac{1}{2}  \r) b(\bv,\bbm,\bu) +  \l( \alpha_P + \frac{1}{2}  \r) b(\bv,\bu,\bbm) + \frac{1}{2} \alpha_\rho (  \GRAD ( \bbm \SCAL \bu )  , \bv )  \\ + \frac{1}{2}  \Big( \l( \bv \SCAL \GRAD \rho, \commentC{ \alpha_{\uu} \uuDG + (1 - \alpha_{\uu}) \uu } \r)  \\  -  \alpha_\rho \l(  \GRAD \l(  (\rho - \Bar{\rho})  \commentC{ \l( \alpha_{\uu} \uuDG + (1 - \alpha_{\uu}) \uu \r) } \r) , \bv \r) \Big)  &= 0, \quad && \forall \bv \in  \bcalV, \\
(\DIV \bu, q) &= 0, \quad && \forall q \in \calQ,
\end{alignedat}
\end{equation}
\commentC{where $\alpha_{\uu} \in \{0 , 1\}$ and $\uuDG \in \calM$ is the projection of $\bu \SCAL \bu$ onto $\calM$, see \eg \citep[Remark 3.1]{Gawlik2020}: Find $ \overline{\uu} \in \calM$ such that
\begin{equation} \label{eq:uu_projection}
 (  \overline{ \bu \SCAL \bu } , w ) = ( \bu \SCAL \bu, w ), \quad \forall w \in \calM.
\end{equation}}

%
\commentC{The reason why $\uuDG$ is used is that we later need to set $w = \bu \SCAL \bu$ to obtain a kinetic energy estimate. This means that we either need to project $\bu \SCAL \bu$ to $\calM$ \eqref{eq:uu_projection} or we require $k_\rho \geq 2 k_\bu$. We note that $\alpha_{\uu} = 0$ corresponds to a standard Galerkin discretization of \eqref{eq:generic_nse_bar}, whereas $\alpha_{\uu} = 1$ corresponds to a modified Galerkin method as proposed by \citep[Remark 3.1]{Gawlik2020}.}

\subsection{Semi-discrete properties of the Galerkin method.} \label{Sec:semi-discrete properties}
\neww{
We summarize the properties of the semi-discrete method \eqref{eq:mom_update_general} in Table \ref{table:emac_mom} and Theorem \ref{theorem div}, which are the main results of this work. In the constant density case, a unique choice of parameters allows for the conservation of kinetic energy, momentum and angular momentum, known as the EMAC formulation \citep{Charnyi2017}. However, the situation becomes more complex in the variable density context. Ignoring $\alpha_{\uu}$, all properties are obtained by setting $\alpha_\rho = \frac{1}{2}, \alpha_\bbm = 1, \alpha_P = 0.25, \Bar{\rho} = \frac{1}{| \Omega |} \int_\Omega \rho\ud \bx$ and choosing the polynomial degree of density to be less than or equal to that of pressure ($k_\rho \leq k_P$). Considering different parameter choices for $\alpha_\rho$, $\alpha_\bbm$, $\alpha_P$, $\Bar{\rho}$, and $k_\rho$ leads to both common and several new formulations from the variable density flow literature, as detailed in Table \ref{table:formulations_of_interest}.}


\neww{In Table \ref{table:formulations_of_interest}, the SI-MEDMAC formulation, when ignoring $\alpha_{\uu}$, is described as shift-invariant and mass, kinetic energy, squared density, momentum and angular momentum conserving. We follow a similar naming convention for the other formulations in the table; for example, MEMAC conserves mass, kinetic energy, momentum, and angular momentum, whereas EDMAC conserves kinetic energy, squared density, momentum, and angular momentum. All formulations with $\alpha_\rho=0$ are termed 'locally' shift-invariant (LSI), meaning they remain invariant to domain stretching, unlike the other formulations. The convective form is obtained when \eqref{eq:cons_law_primitive} is discretized directly without any modifications.  We note a limitation in all formulations listed in Table \ref{table:formulations_of_interest}: the influence of $\alpha_{\uu}$. Specifically, setting $\alpha_{\uu}=1$ allows the Galerkin method to conserve kinetic energy \citep{Gawlik2020} but not momentum and angular momentum. Conversely, with $\alpha_{\uu}=0$, the method can conserve momentum and angular momentum but not kinetic energy. The proof of Theorem \ref{theorem div} highlights that the leading conservation error term for SI-MEDMAC is independent of the divergence-free condition, marking a significant improvement over the other formulations in the table.}

\renewcommand{\arraystretch}{1.3}

\begin{table}[H]
\centering     
\caption{Semi-discrete properties of the method \eqref{eq:mom_update_general} when $\bef = 0$ for the inviscid case when $\DIV \bu \neq 0$.}
\begin{tabular}{|c|c|}    
\hline Property & Condition   \\  \hline 
  $ \partial_t  \int_\Omega \rho \bu \SCAL \bu  \ud\bx$ = 0 & \commentC{$\alpha_{\uu} = 1$ and} $\alpha_\bbm -\alpha_P - \alpha_\rho/2 = 1/2$\\[0.1cm]
    $ \partial_t \int_\Omega \bbm  \ud\bx$ = 0 & \commentC{$\alpha_{\uu} = 0$ and} $\alpha_\bbm = 1$ \\  
     $ \partial_t \int_\Omega \bbm \CROSS \bx   \ud\bx = 0$ & \commentC{$\alpha_{\uu} = 0$ and} $\alpha_\bbm = 1$ \\ 
      $ \partial_t \frac{1}{2} \int_\Omega   \rho^2\ud \bx  -  \int_\Omega  \Bar{\rho}\partial_t \rho   \ud\bx = 0 $ & $\alpha_\rho = 1/2$ \\  
      $ \partial_t  \l(  \int_\Omega   \rho^2\ud \bx  - \frac{1}{ | \Omega | } \l(  \int_\Omega \rho  \ud\bx  \r)^2 \r)   = 0 $ & $\alpha_\rho = 1/2$ and $\Bar{\rho} = \frac{1}{| \Omega |} \int_\Omega \rho\ud \bx $ \\  
      \new{$ \partial_t    \int_\Omega   \rho^2\ud \bx = 0 $ } & \new{ \big[ $\alpha_\rho = 1/2$ and $k_\rho \leq k_P$ \big] or $2 k_\rho \leq k_P$  } \\  
     $\partial_t \int_\Omega \rho   \ud\bx$ = 0 & \new{$k_\rho \leq k_P$ or $\alpha_\rho = 1$}   \\ 
     Shift invariance &  $\alpha_\rho = 0$ or  $\Bar{\rho} = \frac{1}{| \Omega |} \int_\Omega \rho\ud \bx $   \\ \hline 
\end{tabular}                                                                   
\label{table:emac_mom}                                                      
\end{table}

\begin{table}[H]
\centering     
\caption{Existing formulations in the literature and newly derived formulations for variable density flow. The formulations are later tested and compared in numerical experiments.}
\begin{tabular}{|c|c|c|c|c|c|}    
\hline Name & $\alpha_\rho$ & $\alpha_\bbm$ & $\alpha_P$  &$ \bar{\rho}$ & $k_\rho$  \\ \hline
 LSI-EMAC & 0 & 1 & 0.5 & N/A & $\forall k_\rho$ \\
 MEMAC \citep{Zhang_2023,Manzanero_2020} & 1 & 1 & 0 & 0 & $\forall k_\rho$\\
 EDMAC & 0.5 & 1 & 0.25 & 0 & $\forall k_\rho$\\
 SI-MEMAC & 1 & 1 & 0 & $\Bar{\rho} = \frac{1}{| \Omega |} \int_\Omega \rho\ud \bx$ & $\forall k_\rho$ \\ 
 SI-EDMAC & 0.5 & 1 & 0.25 & $\Bar{\rho} = \frac{1}{| \Omega |} \int_\Omega \rho\ud \bx$ & $\forall k_\rho$ \\
 SI-MEDMAC & 0.5 & 1 & 0.25 & $\Bar{\rho} = \frac{1}{| \Omega |} \int_\Omega \rho\ud \bx$ & $k_\rho \leq k_P$ \\
 LSI-EC \citep{Lundgren_2023} & 0 & 0.5 & 0 & N/A & $\forall k_\rho$ \\ 
 Convective \citep[Sec 4]{Guermond_Salgado_2009}  & 0 & 0 & 0 & N/A & $\forall k_\rho$ \\ \hline
\end{tabular}                                                                   
\label{table:formulations_of_interest}                                                      
\end{table}


\begin{theorem} \label{theorem div}
The properties of Table \ref{table:emac_mom} hold provided that the conditions inside it are met.
\end{theorem}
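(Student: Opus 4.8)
The plan is to establish the rows of Table~\ref{table:emac_mom} one at a time, each time substituting a well-chosen test function into the semi-discrete scheme \eqref{eq:mom_update_general} and simplifying with the algebraic identities \eqref{eq:mom_ident_1}--\eqref{eq:advection_IBP2} and periodicity of $\Omega$ (so all boundary terms vanish and $\int_\Omega\DIV\bu\ud\bx=0$). Three structural facts will be used repeatedly: (i) since $P\in\calQ$ and $(\DIV\bu,q)=0$ for all $q\in\calQ$, one has $(\GRAD P,\bu)=-(P,\DIV\bu)=0$; (ii) $\bphi_i$ satisfies $\DIV\bphi_i=0$ and $\GRAD\bphi_i=-(\GRAD\bphi_i)^\top$; and (iii) if $k_\rho\le k_P$ then $\rho-\tfrac1{|\Omega|}\int_\Omega\rho\ud\bx\in\calQ$, so $(\DIV\bu,\rho)=0$, while if $2k_\rho\le k_P$ the same argument applied to $\rho^2$ gives $(\DIV\bu,\rho^2)=0$.

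For the density rows, I would test the mass equation in \eqref{eq:mom_update_general} with $w=1$, $w=\rho$ and $w=\rho-\Bar{\rho}$, all of which lie in $\calM$. With $w=1$, \eqref{eq:advection_IBP1} and $\int_\Omega\DIV\bu\ud\bx=0$ give $\partial_t\int_\Omega\rho\ud\bx=(1-\alpha_\rho)(\DIV\bu,\rho)$, which vanishes under the mass-row condition (by fact (iii) when $k_\rho\le k_P$, and trivially when $\alpha_\rho=1$). With $w=\rho-\Bar{\rho}$, since $\GRAD(\rho-\Bar{\rho})=\GRAD\rho$, \eqref{eq:advection_IBP2} applied to $\rho-\Bar{\rho}$ yields $\partial_t\bigl(\tfrac12\|\rho\|^2\bigr)-\int_\Omega\Bar{\rho}\,\partial_t\rho\ud\bx=(\tfrac12-\alpha_\rho)\bigl((\DIV\bu)(\rho-\Bar{\rho}),\rho-\Bar{\rho}\bigr)$; setting $\alpha_\rho=\tfrac12$ gives the $\Bar{\rho}$-modified squared-density row, and additionally $\Bar{\rho}=\tfrac1{|\Omega|}\int_\Omega\rho\ud\bx$ turns $\int_\Omega\Bar{\rho}\,\partial_t\rho\ud\bx$ into $\tfrac1{2|\Omega|}\partial_t\bigl(\int_\Omega\rho\ud\bx\bigr)^2$, giving the centered squared-density row. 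With $w=\rho$, \eqref{eq:advection_IBP2} gives $\partial_t\bigl(\tfrac12\|\rho\|^2\bigr)+(\alpha_\rho-\tfrac12)\bigl((\DIV\bu)\rho,\rho\bigr)-\alpha_\rho\Bar{\rho}(\DIV\bu,\rho)=0$, which by fact (iii) vanishes either when $\alpha_\rho=\tfrac12$ and $k_\rho\le k_P$, or whenever $2k_\rho\le k_P$. Shift invariance is immediate from Definition~\ref{definition shift invariance}: the only term of the discrete mass equation not manifestly invariant under $\rho\mapsto\rho+c$ is $\alpha_\rho\bigl((\DIV\bu)(\rho-\Bar{\rho}),w\bigr)$, which is preserved exactly when $\alpha_\rho=0$ or when $\Bar{\rho}=\tfrac1{|\Omega|}\int_\Omega\rho\ud\bx$ shifts along with $\rho$.

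For momentum and angular momentum I would test the momentum equation with the constant $\be_i$ and with $\bphi_i=\bx\CROSS\be_i$; both are admissible since $k_\bu\ge1$, both are $t$-independent, and $(\bbm\CROSS\bx)\SCAL\be_i=\bbm\SCAL\bphi_i$, so the left-hand sides become $\partial_t\int_\Omega\bbm\ud\bx$ and $\partial_t\int_\Omega\bbm\CROSS\bx\ud\bx$. With $\bv=\be_i$, periodicity kills $(\GRAD P,\be_i)$ and the two $\alpha_\rho$-weighted gradient terms, while periodic integration by parts and the product rule reduce the two $\alpha_P$-weighted trilinear terms to $-\tfrac12(\be_i\SCAL\GRAD\rho,\uu)$, which exactly cancels the density-gradient term $\tfrac12(\be_i\SCAL\GRAD\rho,\uu)$, leaving $\partial_t\int_\Omega\bbm\ud\bx=(1-\alpha_\bbm)\int_\Omega(\DIV\bu)\bbm\ud\bx$ and hence the condition $\alpha_\bbm=1$. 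With $\bv=\bphi_i$, fact (ii) together with periodicity makes every gradient term vanish and removes the $\DIV\bu$-weighted part of the convective term, and $\GRAD\bphi_i=-(\GRAD\bphi_i)^\top$ forces the remaining EMAC-type trilinear terms, together with the density-gradient term $\tfrac12(\bphi_i\SCAL\GRAD\rho,\uu)$, to cancel as in the constant-density analysis of \citep{Charnyi2017}, again leaving a multiple of $\alpha_\bbm-1$.

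The kinetic-energy row is the delicate one; the plan is to test the momentum equation with $\bv=\bu$, test the mass equation with $w=\tfrac12\uu$, and add, using the elementary identity $\partial_t\bigl(\tfrac12\int_\Omega\rho\,\uu\ud\bx\bigr)=(\partial_t\bbm,\bu)-\tfrac12(\partial_t\rho,\uu)$. Writing $X:=(\DIV\bu,\rho\,\uu)$, $Y:=(\bu\SCAL\GRAD\rho,\uu)$ and $Z:=(\DIV\bu,\uu)$, I would use \eqref{eq:mom_ident_1}, \eqref{eq:move_rho} and \eqref{eq:IBP2} to reduce the convective plus modified-pressure terms of the momentum equation (at $\bv=\bu$) to $(\alpha_\bbm-\alpha_P-\tfrac12)X$, and the three density-aware terms $\tfrac12\alpha_\rho(\GRAD(\bbm\SCAL\bu),\bu)+\tfrac12(\bu\SCAL\GRAD\rho,\uu)-\tfrac12\alpha_\rho(\GRAD((\rho-\Bar{\rho})\uu),\bu)$ to $\tfrac12Y-\tfrac12\alpha_\rho\Bar{\rho}Z$ (each gradient becomes a $(\DIV\bu)$-weighted integral after periodic integration by parts, and the $\Bar{\rho}$-pieces recombine into a pure gradient absorbed by $P=p-\alpha_P\rho\,\uu-\tfrac12\alpha_\rho\Bar{\rho}\,\uu$). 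The mass equation tested with $\tfrac12\uu$ gives $-\tfrac12(\partial_t\rho,\uu)=\tfrac12Y+\tfrac12\alpha_\rho X-\tfrac12\alpha_\rho\Bar{\rho}Z$, and summing everything leaves only $-\bigl(\alpha_\bbm-\alpha_P-\tfrac12\alpha_\rho-\tfrac12\bigr)X$, which gives the condition $\alpha_\bbm-\alpha_P-\alpha_\rho/2=1/2$. The main obstacle I anticipate is bookkeeping: keeping the coefficients of $X$, $Y$, $Z$ correct through every integration by parts, and justifying the use of $\tfrac12\uu$ as a mass-equation test function, since $\uu\notin\calM$ for general polynomial degrees (one either restricts to $2k_\bu\le k_\rho$, or replaces $\tfrac12\uu$ by its $\calM$-projection and checks that the extra momentum terms compensate the difference). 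Everything else reduces to identities already recorded in Section~\ref{Sec:prelim}.
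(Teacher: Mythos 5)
Your overall architecture coincides with the paper's: test the mass equation with $1$, $\rho$, $\rho-\Bar{\rho}$, test the momentum equation with $\be_i$, $\bphi_i$, $\bu$, and reduce everything to $(\DIV\bu,\cdot)$ terms via \eqref{eq:mom_ident_1}--\eqref{eq:advection_IBP2}. The mass, shift-invariance, squared-density, momentum and angular-momentum rows are carried out correctly and match the paper's proof; your treatment of the row ``$\partial_t\int_\Omega\rho^2=0$'' by testing directly with $w=\rho$ and invoking $(\DIV\bu,\rho^2)=0$ when $2k_\rho\le k_P$ is in fact cleaner than the paper's construction, which instead exhibits a constant $A$ solving the quadratic mean-zero condition \eqref{eq:average condition} so that $\tfrac12(\rho-A)^2-\alpha_\rho(\rho-\Bar{\rho})(\rho-A)\in\calQ$; both arguments are valid.

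The kinetic-energy row, however, contains a genuine gap that you flag but do not close, and it is precisely the technical heart of that row. Your computation requires testing the discrete mass equation with $w=\tfrac12\,\uu$, which is not in $\calM$ for any element choice of practical interest (your fallback restriction $2k_\bu\le k_\rho$ is incompatible with the inf-sup requirement $k_P<k_\bu$ together with $k_\rho\le k_P$, and with every pairing in Table \ref{table:formulations_of_interest}). Your second fallback --- ``replace $\tfrac12\uu$ by its $\calM$-projection and check that the extra momentum terms compensate'' --- does not work for the plain $L^2(\calM)$-projection: with that projection one still has $(\p_t\rho,\uu)=(\p_t\rho,\uuDG)$ since $\p_t\rho\in\calM$, but the spatial residual
\begin{equation}
\frac12\bigl(\bu\SCAL\GRAD\rho+\alpha_\rho(\DIV\bu)(\rho-\Bar{\rho}),\ \uuDG-\uu\bigr)
\end{equation}
does not vanish, so the cancellation of the $Y$-terms between the mass and momentum equations fails and an uncontrolled remainder survives. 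The paper's resolution is the projection \eqref{eq:projection_special}, defined to agree with $\uu$ against the \emph{enriched discontinuous} space $\Bar{\calM}_{k_\bu+k_\rho-1}$, chosen exactly so that both $\p_t\rho$ and $\bu\SCAL\GRAD\rho+\alpha_\rho(\DIV\bu)(\rho-\Bar{\rho})$ lie in the test space; this makes every pairing of the mass-equation residual with $\uuDG$ equal to its pairing with $\uu$, after which your bookkeeping in $X$, $Y$, $Z$ goes through verbatim and yields $\alpha_\bbm-\alpha_P-\alpha_\rho/2=1/2$. Without this (or an equivalent device), the kinetic-energy row is not proved.
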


\begin{proof}

We divide the proof into several parts. To simplify the proof we assume a periodic domain and set $\bef = 0$. Extending the proofs to other boundary conditions such as no-slip ($\bu = 0|_{\p \Omega}$) can be done by using the proof technique in \citep[Sec 3.1.2]{Charnyi2017} or \citep[Sec 3.1.2]{Ingimarson2023}.
\noindent \\
\textbf{Mass:}
By using the definition \commentE{of mass} we have that \eqref{eq:mom_update_general} satisfies
\begin{equation} \label{eq:mass_bar1}
\p_t \int_\Omega \rho\ud \bx =  \int_\Omega \p_t \rho \ud \bx  = - ( \bu , \GRAD \rho ) - \alpha_\rho ( \DIV \bu  ,  \rho - \Bar{\rho}  ) .
\end{equation}

Since $\GRAD \Bar{\rho} = 0$, \eqref{eq:mass_bar1} is equivalent to
\begin{equation}
\begin{split}
\p_t \int_\Omega \rho\ud \bx = & - ( \bu , \GRAD ( \rho - \Bar{\rho} )  ) - \alpha_\rho (  \DIV \bu  ,  \rho - \Bar{\rho} )  \\
= \  &  ( \DIV \bu ,  \rho - \Bar{\rho}   )   - \alpha_\rho (  \DIV \bu ,  \rho - \Bar{\rho} ) \\
= \ & \ ( 1 -\alpha_\rho  ) ( \DIV \bu,  \rho - \Bar{\rho} )  ,\\
\end{split}
\end{equation}
where integration by parts was used on the first term. Therefore mass is conserved if $\alpha_\rho = 1$. If $\Bar{\rho} = \int_\Omega \rho \ud \bx$ then
\begin{equation}
\int_\Omega \new{ (} \rho - \Bar{\rho}  \new{  )} \ud \bx = 0,
\end{equation}
which means that $(\rho - \Bar{\rho} ) \in \calQ$ provided that $k_\rho \leq k_P$. Invoking the weak divergence-free condition \eqref{eq:mom_update_general} shows that
\begin{equation}
( \DIV \bu, \rho - \Bar{\rho} ) = 0,
\end{equation}
which means that mass is conserved if $k_\rho \leq k_P$.


\noindent \\
\textbf{Shift-invariance:}
If $\alpha_\rho = 0$ then the proof is evident from \eqref{eq:shift_invariance}. Otherwise, if $\Bar{\rho} = \frac{1}{| \Omega |} \int_\Omega \rho\ud \bx$, the following is obtained by shifting $\rho$ by a constant $c$ in \eqref{eq:generic_nse_bar}
\begin{equation}
\begin{split}
\p_t ( \rho + c) + \bu \SCAL \GRAD (\rho + c) + \alpha_\rho ( \DIV \bu ) \l( \rho + c - \frac{1}{|\Omega |}   \int_\Omega (\rho +  c ) \ud \bx \r) &= 0, \\ 
\p_t \rho + \bu \SCAL \GRAD \rho  + \alpha_\rho ( \DIV \bu ) \l( \rho + c - \frac{1}{|\Omega |}   \int_\Omega ( \rho + c ) \ud \bx \r) &= 0, \\ 
\p_t \rho + \bu \SCAL \GRAD \rho  + \alpha_\rho ( \DIV \bu ) \l( \rho  - \frac{1}{|\Omega |}   \int_\Omega \rho \ud \bx \r) &= 0, \\ 
\p_t \rho + \bu \SCAL \GRAD \rho  + \alpha_\rho ( \DIV \bu ) \l( \rho  - \Bar{\rho} \r) &= 0, \\ 
\end{split}
\end{equation}
which shows that the formulation is shift-invariant.
\noindent \\
\textbf{Squared density:}
By using that $\GRAD \Bar{\rho} = 0$, one can show that \eqref{eq:mom_update_general} satisfies
\begin{equation} \label{eq:rho_energy1}
\partial_t \frac{1}{2} \int_\Omega   \rho^2\ud \bx  -  \int_\Omega  \Bar{\rho}\partial_t \rho   \ud\bx = ( \new{\p_t} \rho , \rho - \Bar{\rho}) =  - ( \bu \SCAL \GRAD ( \rho - \Bar{\rho} ), \rho - \Bar{\rho} ) -  \alpha_\rho ( (\DIV \bu) (\rho - \Bar{\rho} ) , \rho - \Bar{\rho} )  .
\end{equation}
Using \eqref{eq:advection_IBP2} inside \eqref{eq:rho_energy1} yields

\begin{equation} \label{eq:sq_proof1}
\partial_t \frac{1}{2} \int_\Omega   \rho^2\ud \bx  -  \int_\Omega  \Bar{\rho}\partial_t \rho   \ud\bx = \l(  \frac{1}{2} - \alpha_\rho \r) ( (\DIV \bu) (\rho - \Bar{\rho} ) , \rho - \Bar{\rho} )  .
\end{equation}
If $\alpha_\rho =  \frac{1}{2} $ then $\partial_t \frac{1}{2} \int_\Omega   \rho^2\ud \bx  -  \int_\Omega  \Bar{\rho}\partial_t \rho   \ud\bx = 0$. This shows that line 4 in Table \ref{table:emac_mom} is true.

If $\Bar{\rho} =  \frac{1}{| \Omega |} \int_\Omega \rho\ud \bx$, then we can perform the following simplification
\begin{equation}
\begin{split} \label{eq:sq_proof2}
\partial_t \frac{1}{2} \int_\Omega   \rho^2\ud \bx  -  \int_\Omega  \Bar{\rho}\partial_t \rho   \ud\bx &= \partial_t \frac{1}{2} \int_\Omega   \rho^2\ud \bx  -  \Bar{\rho} \int_\Omega  \partial_t \rho   \ud\bx \\ 
&= \partial_t \frac{1}{2}\int_\Omega   \rho^2\ud \bx  - \frac{1}{| \Omega |} \l(  \int_\Omega \rho\ud \bx \r)  \int_\Omega   \partial_t \rho   \ud\bx  \\
&= \partial_t \frac{1}{2} \int_\Omega   \rho^2\ud \bx  -  \frac{1}{| \Omega |} \l(  \int_\Omega \rho\ud \bx \r)\partial_t \l( \int_\Omega    \rho   \ud\bx \r)  \\
&= \partial_t \frac{1}{2} \l(  \int_\Omega   \rho^2\ud \bx  -  \frac{1}{| \Omega |} \l(  \int_\Omega \rho\ud \bx \r)^2 \r),  
\end{split}
\end{equation}
to show that line 5 in Table \ref{table:emac_mom} is true.

\new{Lastly, we consider line 6 in Table \ref{table:emac_mom}. By recalling that $ \partial_t \int_\Omega  \rho = 0$ if $k_\rho \leq k_P$, \eqref{eq:sq_proof2} simplifies to
\begin{equation}
\partial_t \frac{1}{2} \int_\Omega   \rho^2\ud \bx  -  \int_\Omega  \Bar{\rho}\partial_t \rho   \ud\bx = \partial_t \frac{1}{2} \int_\Omega   \rho^2\ud \bx,
\end{equation}
which shows that squared density is conserved if $\alpha_\rho = \frac{1}{2}$ and $k_\rho \leq k_P$.
Next, for $A \in \mR $ we also have
\begin{equation} 
\begin{split}
\partial_t \frac{1}{2} \int_\Omega   \rho^2\ud \bx  -  \int_\Omega  A \partial_t \rho   \ud\bx &= ( \new{\p_t} \rho , \rho - A) =  - ( \bu \SCAL \GRAD ( \rho - A ), \rho - A ) -  \alpha_\rho ( (\DIV \bu) (\rho - \Bar{\rho} ) , \rho - A )   \\
&=  \l( \DIV \bu , \frac{1}{2}  ( \rho - A )^2 -  \alpha_\rho  (\rho - \Bar{\rho} ) ( \rho - A )  \r).   \\
\end{split}
\end{equation}
There exist two roots of $A$ that satisfies
\begin{equation} \label{eq:average condition}
\int_\Omega \l( \frac{1}{2}  ( \rho - A )^2 -  \alpha_\rho  (\rho - \Bar{\rho} ) ( \rho - A ) \r) \ud \bx = 0.
\end{equation}
We only need one $A$ to satisfy \eqref{eq:average condition} and one of the roots (found symbolically using Matlab) are
\begin{equation}
\begin{split}
A =   |\Omega|^{-1} \l(  \alpha_\rho|\Omega|\Bar{\rho} + (1 - \alpha_\rho) \int_\Omega \rho \ud \bx + \l(\alpha_\rho^2|\Omega|^2\Bar{\rho}^2 - 2\alpha_\rho^2|\Omega| \Bar{\rho} \int_\Omega \rho \ud \bx + \alpha_\rho^2 \l( \int_\Omega\rho \ud \bx \r)^2 + 2 \alpha_\rho|\Omega| \int_\Omega \rho^2 \ud \bx \r. \r. \\ \l. \l. - 2 \alpha_\rho \l( \int_\Omega \rho \ud \bx \r)^2  - |\Omega| \int_\Omega \rho^2 \ud \bx + \l(\int_\Omega \rho \ud \bx \r)^2\r)^{1/2}   \r).
\end{split}
\end{equation}
This means that, if $2 k_\rho \leq k_P $ then there exists an $A$ such that $ \l( \frac{1}{2}  ( \rho - A )^2 -  \alpha_\rho  (\rho - \Bar{\rho} ) ( \rho - A ) \r) \in  \calQ$. Because of this, we can invoke the weak divergence-free condition \eqref{eq:mom_update_general} to conclude that squared density is conserved.}


\noindent \\
\textbf{Kinetic energy:}
\commentC{Using the definition of kinetic energy, the definition of the $L^2$-projection \eqref{eq:uu_projection} and setting $w= \overline{\uu}$, $\bv = \bu$ inside the density and momentum updates \eqref{eq:mom_update_general} yields }
\begin{equation}
\begin{split}
\frac{1}{2} \p_t \int_\Omega \rho \bu \SCAL \bu\ud \bx  = & \   \frac{1}{2} ( \bu \new{\p_t \rho} , \bu) +  ( \rho \new{\p_t} \bu, \bu) = ( \bu \new{\p_t} \rho , \bu) + (\rho \new{\p_t} \bu, \bu) - \frac{1}{2} (  \bu \new{\p_t \rho } , \bu)   \\ = & \ ( \new{\p_t} \bbm  , \bu )  - \frac{1}{2} \l( \new{\p_t} \rho , \bu \SCAL \bu \r) = ( \new{\p_t} \bbm  , \bu )  - \frac{1}{2} \l( \new{\p_t} \rho , \uuDG \r) \\
=& - b( \bu, \bbm, \bu ) - \alpha_\bbm ( ( \DIV \bu ) \bbm, \bu) - (\GRAD P, \bu) \\  
& - \l( \alpha_P - \frac{1}{2}  \r)  b(\bu,\bbm,\bu) -  \l( \alpha_P + \frac{1}{2}  \r) b(\bu,\bu,\bbm) - \frac{1}{2} \alpha_\rho (  \GRAD ( \bbm \SCAL \bu )  , \bu )  \\ 
&- \frac{1}{2} \l( \l( \bu \SCAL \GRAD \rho, \commentC{ \alpha_{\uu} \uuDG + (1 - \alpha_{\uu}) \uu } \r) -  \alpha_\rho \l(  \GRAD \l(  (\rho - \Bar{\rho}) \commentC{ \l( \alpha_{\uu} \uuDG + (1 - \alpha_{\uu}) \uu \r) } \r) , \bu \r) \r) \\
& + \frac{1}{2} \l( \bu \SCAL \GRAD \rho  + \alpha_\rho  (\DIV \bu) ( \rho - \Bar{\rho} ) , \uuDG  \r).
\end{split}
\end{equation}

Integration by parts on the pressure term and invoking the weak divergence-free constraint inside \eqref{eq:mom_update_general}, using \eqref{eq:IBP1} on $\l( \alpha_P - \frac{1}{2}  \r) b(\bu,\bbm,\bu)$ and integration by parts on $ \commentC{ \alpha_\rho \l(  \GRAD \l(  (\rho - \Bar{\rho})  \l( \alpha_{\uu} \uuDG + (1 - \alpha_{\uu}) \uu \r) \r), \bu \r) }  $ yields
\begin{equation}
\begin{split}
\frac{1}{2} \p_t \int_\Omega \rho \bu \SCAL \bu\ud \bx  =& - b( \bu, \bbm, \bu ) + \l(   \alpha_P - \frac{1}{2}   - \alpha_\bbm \r) ( ( \DIV \bu ) \bbm, \bu) \\  
& + \l( \alpha_P - \frac{1}{2}  \r)  b(\bu,\bu,\bbm)   -  \l( \alpha_P + \frac{1}{2}  \r) b(\bu,\bu,\bbm) - \frac{1}{2} \alpha_\rho (  \GRAD ( \bbm \SCAL \bu )  , \bu )  \\ 
&- \frac{1}{2} \l( \l( \bu \SCAL \GRAD \rho, \commentC{ \alpha_{\uu} \uuDG + (1 - \alpha_{\uu}) \uu } \r) +  \alpha_\rho \l(  \DIV \bu ,   (\rho - \Bar{\rho}) \commentC{ \l( \alpha_{\uu} \uuDG + (1 - \alpha_{\uu}) \uu \r) }   \r) \r) \\ 
& + \frac{1}{2} \l( \bu \SCAL \GRAD \rho  + \alpha_\rho  (\DIV \bu) ( \rho - \Bar{\rho} ) , \uuDG  \r) \\ \\
=& - b( \bu, \bbm, \bu ) + \l(   \alpha_P - \frac{1}{2}   - \alpha_\bbm \r) ( ( \DIV \bu ) \bbm, \bu) \\  
&  -  b(\bu,\bu,\bbm) - \frac{1}{2} \alpha_\rho (  \GRAD ( \bbm \SCAL \bu )  , \bu )  \\
 &\commentA{- \frac{1}{2} \l( \bu \SCAL \GRAD \rho  + \alpha_\rho  (\DIV \bu) ( \rho - \Bar{\rho} ) , \commentC{ \alpha_{\uu} \uuDG + (1 - \alpha_{\uu}) \uu } \r)} \\ 
&\commentA{ + \frac{1}{2} \l( \bu \SCAL \GRAD \rho  + \alpha_\rho  (\DIV \bu) ( \rho - \Bar{\rho} ) , \uuDG  \r).}
\end{split}
\end{equation}
\commentA{Next,} using \eqref{eq:IBP1} on $b( \bu, \bbm, \bu )$ and integration by parts on $\frac{1}{2} \alpha_\rho (  \GRAD ( \bbm \SCAL \bu )  , \bu )$ yields
\begin{equation}
\begin{split}
\frac{1}{2} \p_t \int_\Omega \rho \bu \SCAL \bu\ud \bx  =&   \l(   \alpha_P + \frac{1}{2}   - \alpha_\bbm + \frac{1}{2} \alpha_\rho \r) ( ( \DIV \bu ) \bbm, \bu), \\ 
&\commentA{- \frac{1}{2} \l( \bu \SCAL \GRAD \rho  + \alpha_\rho  (\DIV \bu) ( \rho - \Bar{\rho} ) , \commentC{ \alpha_{\uu} \uuDG + (1 - \alpha_{\uu}) \uu } \r)} \\ 
&\commentA{ + \frac{1}{2} \l( \bu \SCAL \GRAD \rho  + \alpha_\rho  (\DIV \bu) ( \rho - \Bar{\rho} ) , \uuDG  \r).} 
\end{split}
\end{equation}
which means that kinetic energy is conserved if $\alpha_\bbm -\alpha_P - \alpha_\rho/2 = 1/2$ \commentC{and if $\alpha_{\uu} = 1$}.
\noindent \\
\textbf{Momentum:} 
We set $\bv = \be_i$ inside the momentum equations \eqref{eq:mom_update_general} and obtain
\begin{equation}
\begin{split}
\p_t \int_\Omega \bbm_i  \ud\bx = & \ ( \new{\p_t} \bbm , \be_{i}) =  - b( \bu, \bbm, \be_i ) - \alpha_\bbm ( ( \DIV \bu ) \bbm, \be_i) - (\GRAD P, \be_i) \\   &- \l( \alpha_P - \frac{1}{2}  \r) b(\be_i,\bbm,\bu) -  \l( \alpha_P + \frac{1}{2}  \r) b(\be_i,\bu,\bbm) +\frac{1}{2} \alpha_\rho ( (\DIV \be_i) \bbm , \bu)  \\ &- \frac{1}{2} \l( \l( \be_i \SCAL \GRAD \rho,  \commentC{ \alpha_{\uu} \uuDG + (1 - \alpha_{\uu}) \uu } \r) +  \alpha_\rho \l( ( \DIV \be_i )  (\rho - \Bar{\rho}),  \commentC{ \alpha_{\uu} \uuDG + (1 - \alpha_{\uu}) \uu }  \r) \r)  \\ \\
= &- b( \bu, \bbm, \be_i ) - \alpha_\bbm ( ( \DIV \bu ) \bbm, \be_i) - (\GRAD P, \be_i) \\ 
 & - \l( \alpha_P - \frac{1}{2}  \r) b(\be_i,\bbm,\bu) -  \l( \alpha_P + \frac{1}{2}  \r) b(\be_i,\bu,\bbm)    - \frac{1}{2} \l( \be_i \SCAL \GRAD \rho,  \commentC{ \alpha_{\uu} \uuDG + (1 - \alpha_{\uu}) \uu }  \r)  ,
\end{split}
\end{equation}
since $\be_i$ is a constant. Using \eqref{eq:IBP1} on $\l( \alpha_P - \frac{1}{2}  \r) b(\be_i,\bbm,\bu)$ and $b( \bu, \bbm, \be_i )$ and integration by parts on the pressure term yield

\begin{equation}
\begin{split}
\p_t \int_\Omega \bbm_i  \ud\bx = & \  b( \bu, \be_i, \bbm ) + ( (\DIV \bu) \bbm, \be_i) - \alpha_\bbm ( ( \DIV \bu ) \bbm, \be_i) + ( P, \DIV \be_i) \\  & + \l( \alpha_P - \frac{1}{2}  \r) b(\be_i,\bu,\bbm) -  \l( \alpha_P + \frac{1}{2}  \r) b(\be_i,\bu,\bbm)    - \frac{1}{2} \l(  \be_i \SCAL \GRAD \rho,  \commentC{ \alpha_{\uu} \uuDG + (1 - \alpha_{\uu}) \uu }  \r)   \\ 
=& \ (1 -\alpha_\bbm  ) ( ( \DIV \bu ) \bbm, \be_i)  - b(\be_i,\bu,\bbm)    - \frac{1}{2} \l(  \be_i \SCAL \GRAD \rho, \commentC{ \alpha_{\uu} \uuDG + (1 - \alpha_{\uu}) \uu }  \r)  \\ 
=& \ (1-\alpha_\bbm  ) ( ( \DIV \bu ) \bbm, \be_i)  - b( \rho \be_i,\bu,\bu)    - \frac{1}{2} \l(  \be_i \SCAL \GRAD \rho,  \commentC{ \alpha_{\uu} \uuDG + (1 - \alpha_{\uu}) \uu }  \r)  \\ 
=& \ (1-\alpha_\bbm  ) ( ( \DIV \bu ) \bbm, \be_i)  + \frac{1}{2}  ( ( \DIV (\rho \be_i)) \bu , \bu)    - \frac{1}{2}  \l( \be_i \SCAL \GRAD \rho,  \commentC{ \alpha_{\uu} \uuDG + (1 - \alpha_{\uu}) \uu } \r)    \\ 
=& \ (1-\alpha_\bbm  ) ( ( \DIV \bu ) \bbm, \be_i) + \frac{1}{2}  \l( \be_i \SCAL \GRAD \rho, \uu \commentC{- \alpha_{\uu} \uuDG - (1 - \alpha_{\uu}) \uu } \r)   ,  
\end{split}%
\end{equation}
where \eqref{eq:IBP2} was used in the third equality \commentA{and \eqref{eq:move_rho} was used in the second equality}. Thus, momentum is conserved if $\alpha_\bbm = 1$ \commentC{  and if $\alpha_{\uu} = 0$.}
\noindent \\
\textbf{Angular momentum:} 
Conservation of angular momentum is derived by setting $\bv = \bphi_i$ inside \eqref{eq:mom_update_general} to obtain 
\begin{equation}
\begin{split}
\p_t \int_\Omega (\bbm \CROSS \bx)_i\ud \bx = & \ ( \new{\p_t} \bbm , \bphi_i) =  - b( \bu, \bbm, \bphi_i ) - \alpha_\bbm ( ( \DIV \bu ) \bbm, \bphi_i) - (\GRAD P, \bphi_i) \\  & - \l( \alpha_P - \frac{1}{2}  \r) b(\bphi_i,\bbm,\bu) -  \l( \alpha_P + \frac{1}{2}  \r) b(\bphi_i,\bu,\bbm) + \frac{1}{2} \alpha_\rho ( (\DIV \bphi_i) \bbm , \bu)  \\ &- \frac{1}{2} \l( \l( \bphi_i \SCAL \GRAD \rho, \commentC{ \alpha_{\uu} \uuDG + (1 - \alpha_{\uu}) \uu } \r) +  \alpha_\rho \l( ( \DIV \bphi_i )  (\rho - \Bar{\rho}), \commentC{ \alpha_{\uu} \uuDG + (1 - \alpha_{\uu}) \uu }  \r) \r)  \\ \\
= & -b( \bu, \bbm, \bphi_i ) - \alpha_\bbm ( ( \DIV \bu ) \bbm, \bphi_i) - (\GRAD P, \bphi_i) \\  & - \l( \alpha_P - \frac{1}{2}  \r) b(\bphi_i,\bbm,\bu) -  \l( \alpha_P + \frac{1}{2}  \r) b(\bphi_i,\bu,\bbm)    - \frac{1}{2} \l(  \bphi_i \SCAL \GRAD \rho, \commentC{ \alpha_{\uu} \uuDG + (1 - \alpha_{\uu}) \uu }  \r),  
\end{split}
\end{equation}
since $ \DIV \bphi_i = 0$. Using \eqref{eq:IBP1} on $\l( \alpha_P - \frac{1}{2}  \r) b(\bphi_i,\bbm,\bu)$ and $b( \bu, \bbm, \bphi_i )$ and integration by parts on the pressure term yield%
\begin{equation}
\begin{split}
\p_t \int_\Omega (\bbm \CROSS \bx)_i\ud \bx = &   \ b( \bu, \bphi_i, \bbm ) + ( (\DIV \bu) \bbm, \bphi_i) - \alpha_\bbm ( ( \DIV \bu ) \bbm, \bphi_i) + ( P, \DIV \bphi_i) \\ &  + \l( \alpha_P - \frac{1}{2}  \r) b(\bphi_i,\bu,\bbm) -  \l( \alpha_P + \frac{1}{2}  \r) b(\bphi_i,\bu,\bbm)    - \frac{1}{2} \l( \bphi_i \SCAL \GRAD \rho, \commentC{ \alpha_{\uu} \uuDG + (1 - \alpha_{\uu}) \uu }  \r)    \\ 
= & \  b( \bu, \bphi_i, \bbm )  + (1 -\alpha_\bbm ) ( ( \DIV \bu ) \bbm, \bphi_i)  - b(\bphi_i,\bu,\bbm)    - \frac{1}{2} \l(  \bphi_i \SCAL \GRAD \rho, \commentC{ \alpha_{\uu} \uuDG + (1 - \alpha_{\uu}) \uu }  \r)   \\ 
= & \ b( \bu, \bphi_i, \bbm ) + (1 -\alpha_\bbm ) ( ( \DIV \bu ) \bbm, \bphi_i)  - b( \rho \bphi_i,\bu,\bu)   - \frac{1}{2} \l(  \bphi_i \SCAL \GRAD \rho, \commentC{ \alpha_{\uu} \uuDG + (1 - \alpha_{\uu}) \uu }  \r)   \\ 
=&  \ b( \bu, \bphi_i, \bbm ) + (1 -\alpha_\bbm ) ( ( \DIV \bu ) \bbm, \bphi_i)  + \frac{1}{2}  ( ( \DIV (\rho \bphi_i)) \bu , \bu)    - \frac{1}{2} \l(  \bphi_i \SCAL \GRAD \rho, \commentC{ \alpha_{\uu} \uuDG + (1 - \alpha_{\uu}) \uu }  \r)   \\ 
= & \ b( \bu, \bphi_i, \bbm ) + (1 -\alpha_\bbm ) ( ( \DIV \bu ) \bbm, \bphi_i) + \frac{1}{2}  \l( \bphi_i \SCAL \GRAD \rho  , \uu   \commentC{ - \alpha_{\uu} \uuDG - (1 - \alpha_{\uu}) \uu }  \r),    
\end{split}
\end{equation}
where \eqref{eq:IBP2} was used in the third equality \commentA{and \eqref{eq:move_rho} was used in the second equality}. Next, by using \eqref{eq:trilinear definition} it can be shown that
\begin{equation} \label{eq:trillinear angular momentum}
\begin{split}
 b(\bu,\bphi_i, \bbm) & = b(\bu, \bphi_i, \bu \rho)  = \frac{1}{2}b(\bu, \bphi_i, \bu \rho) + \frac{1}{2}b(\bu, \bphi_i, \bu \rho) \\ &  = \frac{1}{2} \l( (\GRAD \bphi_i)^\top \bu, \bu \rho \r) + \frac{1}{2} \l( (\GRAD \bphi_i)^\top \bu, \bu \rho \r) = \frac{1}{2} \l( (\GRAD \bphi_i)^\top \bu, \bu \rho \r) + \frac{1}{2} ( (\GRAD \bphi_i) \bu, \bu \rho ) = 0,
\end{split}
\end{equation}
since $\GRAD \bphi_i + (\GRAD \bphi_i)^\top = 0$. Since $b( \bu, \bphi_i, \bbm )=0$, angular momentum is conserved if $\alpha_\bbm = 1$ \commentC{ and $\alpha_{\uu} = 0$}.


\end{proof}

\section{Properties of viscous regularizations.} \label{Sec:viscous_regularization}
In this section, we investigate how viscous regularizations of the incompressible Euler equations \eqref{eq:cons_law_primitive} affect conservation properties. We note that the purpose of this section is to investigate the properties of the model and not to investigate the properties of the numerical method. \commentC{Accordingly, we simplify the analysis by assuming that $\DIV \bu = 0$ holds pointwise and that the parameter $\alpha_{\uu}$ is unimportant since it is a property of the numerical method, not the model. We also assume that the domain is periodic.} We consider a viscous regularization of the following form

\begin{equation}\label{eq:nse_regularized}
      \begin{aligned}
        \p_t \rho + \bu \SCAL \GRAD \rho  & = \DIV   \bef_{\rho}  ,\\
 \p_t \bbm + \bu \SCAL \GRAD  \bbm  + \GRAD p  &= \bef + \DIV \l( \mu \l( \GRAD \bu + (\GRAD \bu)^\top \r)  \r)  + \bef_{\bbm} , \quad &(\bx,t) \in  \Omega \CROSS (0,T],\\
           \DIV \bu &= 0,\\
           \bu(\bx,0) &= \bu_{0}(\bx), \\
           \rho(\bx,0) &= \rho_{0}(\bx), \quad  &\bx\in \Omega,  \\
      \end{aligned}
\end{equation}
where $\mu \geq 0 $ is the dynamic viscosity coefficient which in general is space-dependent, $\bef_{\rho} := \kappa \GRAD \rho$, where $\kappa \geq 0$ is a mass diffusivity coefficient which also is space-dependent. In this work, we define $\bef_\bbm$ as


\begin{equation} \label{eq:general_viscous_flux}
\bef_{\bbm} ( \bu , \bef_\rho ) := A_1 ( \GRAD \bef_{ \rho} ) \bu + A_2 (\GRAD \bef_{ \rho} )^\top \bu + A_3 (\GRAD \bu) \bef_{ \rho} + A_4 (\GRAD \bu)^\top \bef_{ \rho}  + A_5 (\DIV \bef_{ \rho} ) \bu,
\end{equation}
where $A_i \in \mR$. Depending on how the constants $A_i$ are chosen, different properties of the model are obtained and this is summarized in Theorem \ref{theorem:momentum-viscous-flux}.

\begin{theorem} \label{theorem:momentum-viscous-flux}
If $\bef = 0$, the system \eqref{eq:nse_regularized}:

\begin{enumerate}
\item Dissipates kinetic energy if $A_1 + A_2 - A_3 = 0 \text{ and } A_5 - \frac{1}{2} A_4 = \frac{1}{2}$.
\item Conserves momentum if $A_1 - A_3 = 0 \text{ and } A_5 - A_4 = 0$.
\item Conserves angular momentum if $A_1 - A_3 = 0 \text{ and } A_5 - A_4 = 0 \text{ and } A_2   =   A_4$.

\end{enumerate}

\end{theorem}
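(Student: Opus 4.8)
The plan is to mimic the energy and momentum arguments used in the proof of Theorem~\ref{theorem div}: test the system \eqref{eq:nse_regularized} (with $\bef=0$, periodic boundary conditions, and $\DIV\bu=0$ holding pointwise) against the quantity appropriate to each functional --- $\bu$ for kinetic energy, the constant vector $\be_i$ for momentum, and $\bphi_i$ for angular momentum --- integrate by parts, and isolate the contributions of the new flux $\bef_{\bbm}$ of \eqref{eq:general_viscous_flux} and of the density diffusion $\bef_{\rho}$. The inviscid convective terms are dispatched exactly as in Theorem~\ref{theorem div}, so the only genuinely new work is the algebra of the coefficients $A_i$.

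\noindent\textbf{Kinetic energy.} Starting from $\tfrac12\p_t\int_\Omega\rho\,\uu\ud\bx=(\p_t\bbm,\bu)-\tfrac12(\p_t\rho,\uu)$ (which follows from $\rho\,\p_t\bu=\p_t\bbm-(\p_t\rho)\bu$) and substituting both evolution equations, the pressure term drops out since $(\GRAD p,\bu)=-(p,\DIV\bu)=0$; the symmetric viscous stress term gives, after integration by parts, $-\tfrac12\int_\Omega\mu\,|\GRAD\bu+(\GRAD\bu)^\top|^2\ud\bx\le0$; and the inviscid convective terms cancel as in the squared-density and kinetic-energy parts of Theorem~\ref{theorem div}. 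The only remaining contribution is $(\bu,\bef_{\bbm})-\tfrac12((\DIV\bef_{\rho}),\uu)$, where the second piece comes from $-\tfrac12(\p_t\rho,\uu)$ via the density equation. Expanding \eqref{eq:general_viscous_flux}, using $\bu^\top(\GRAD\bef_{\rho})\bu=\bu^\top(\GRAD\bef_{\rho})^\top\bu$ and integrating by parts with $\DIV\bu=0$, the $A_1$- and $A_2$-contributions become $-(A_1+A_2)(\bef_{\rho},\bu\SCAL\GRAD\bu)$ and combine with the $A_3$-term into $(A_3-A_1-A_2)(\bef_{\rho},\bu\SCAL\GRAD\bu)$; the $A_4$-term equals $\tfrac{A_4}{2}(\bef_{\rho},\GRAD(\uu))$, which after integration by parts joins the $A_5$-term and the $-\tfrac12((\DIV\bef_{\rho}),\uu)$ contribution into $(A_5-\tfrac12A_4-\tfrac12)((\DIV\bef_{\rho}),\uu)$. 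The two stated conditions annihilate both coefficients, leaving $\tfrac12\p_t\int_\Omega\rho\,\uu\ud\bx=-\tfrac12\int_\Omega\mu\,|\GRAD\bu+(\GRAD\bu)^\top|^2\ud\bx\le0$.

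\noindent\textbf{Momentum and angular momentum.} Testing the momentum equation with the constant $\be_i$ kills the pressure and the $\mu$-divergence terms (integrals of exact divergences over a periodic domain) and the convective term ($(\bu\SCAL\GRAD\bbm_i,1)=-((\DIV\bu)\bbm_i,1)=0$), so $\p_t\int_\Omega\bbm_i\ud\bx=\int_\Omega(\bef_{\bbm})_i\ud\bx$. Componentwise, the $A_2$-term of \eqref{eq:general_viscous_flux} is an exact divergence and vanishes; integration by parts gives $\int_\Omega((\GRAD\bef_{\rho})\bu)_i\ud\bx=-\int_\Omega((\GRAD\bu)\bef_{\rho})_i\ud\bx$ and $\int_\Omega((\GRAD\bu)^\top\bef_{\rho})_i\ud\bx=-\int_\Omega(\DIV\bef_{\rho})u_i\ud\bx$, so the total reads $(A_3-A_1)\int_\Omega((\GRAD\bu)\bef_{\rho})_i\ud\bx+(A_5-A_4)\int_\Omega(\DIV\bef_{\rho})u_i\ud\bx$, which vanishes under the stated conditions. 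For angular momentum I would test with $\bphi_i$ and exploit its three structural properties $\DIV\bphi_i=0$, $\GRAD\bphi_i+(\GRAD\bphi_i)^\top=0$, and $\GRAD\bphi_i$ constant: the pressure term vanishes because $\DIV\bphi_i=0$, the $\mu$-term vanishes because the symmetric stress contracts to zero against the antisymmetric $\GRAD\bphi_i$, and the convective term reduces via \eqref{eq:IBP1} and \eqref{eq:trillinear angular momentum} to $b(\bu,\bbm,\bphi_i)=-b(\bu,\bphi_i,\bbm)=0$, leaving $\p_t\int_\Omega(\bbm\CROSS\bx)_i\ud\bx=\int_\Omega\bef_{\bbm}\SCAL\bphi_i\ud\bx$. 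Integration by parts (using $\DIV\bphi_i=0$ on the $A_1$- and $A_3$-terms, and $\DIV\bu=0$ together with the constancy and antisymmetry of $\GRAD\bphi_i$ on the $A_2$-, $A_4$- and $A_5$-terms) reduces this integral to $(A_3-A_1)(\bef_{\rho},\bphi_i\SCAL\GRAD\bu)+(A_2-A_4)(\bef_{\rho},(\GRAD\bphi_i)\bu)+(A_5-A_4)((\DIV\bef_{\rho}),\bphi_i\SCAL\bu)$, whose three coefficients all vanish under the stated conditions.

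\noindent\textbf{Main obstacle.} The delicate step is the angular-momentum bookkeeping: carefully tracking transposes under the convention $(\GRAD\bv)_{ij}=\p_{x_i}v_j$, and invoking the divergence-free, antisymmetric-gradient, and constant-gradient properties of $\bphi_i$ at exactly the right places so that the $A_2$- and $A_4$-terms collapse onto the single bilinear form $(\bef_{\rho},(\GRAD\bphi_i)\bu)$. The kinetic-energy and momentum estimates are essentially routine once the inviscid convective and symmetric-stress pieces are handled as in Theorem~\ref{theorem div}.
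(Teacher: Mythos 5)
Your proposal is correct and follows essentially the same route as the paper's proof: test against $\bu$, $\be_i$ and $\bphi_i$, dispatch the pressure, convective and symmetric-stress terms as in Theorem \ref{theorem div}, and integrate the five $A_i$-terms by parts (using $\DIV\bu=0$, $\GRAD\be_i=0$, and the antisymmetry of $\GRAD\bphi_i$) so that they collapse onto the coefficients $A_1+A_2-A_3$, $A_5-\tfrac12 A_4-\tfrac12$, $A_1-A_3$, $A_5-A_4$ and $A_2-A_4$. The only differences are cosmetic bookkeeping choices, e.g.\ moving the derivative off the $A_1,A_2$ terms instead of the $A_3$ term, and arguing componentwise for momentum rather than via the trilinear-form identities.
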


\begin{proof}
We divide the proof into several sections. Extending the proofs to other boundary conditions such as no-slip ($\bu = 0|_{\p \Omega}$) can be done by using the proof technique in \citep[Sec 3.1.2]{Charnyi2017} or \citep[Sec 3.1.2]{Ingimarson2023}.





\noindent \\
\textbf{Kinetic energy:} Repeating similar steps on \eqref{eq:nse_regularized} as in the proof of Theorem \ref{theorem div} and assuming $\DIV \bu = 0$ yields
\begin{equation} \label{eq:thm_part3}
\begin{split}
\frac{1}{2} \p_t \int_\Omega \rho \bu \SCAL \bu\ud \bx   = & \ ( \new{\p_t} \bbm , \bu ) - \frac{1}{2} \l( \new{\p_t} \rho , \bu \SCAL \bu \r)  =   \l( \bef_{\bbm} , \bu \r) - \frac{1}{2} \l( \l( \DIV \bef_{ \rho} \r) \bu , \bu \r) + \l( \DIV \l( \mu \l( \GRAD \bu + (\GRAD \bu)^\top \r)  \r) , \bu \r), \\
\end{split}
\end{equation}
where only the viscous terms determine if kinetic energy is dissipated. Next, we obtain
\begin{equation} 
\begin{split} \label{eq:thm_part4}
\frac{1}{2} \p_t \int_\Omega \rho \bu \SCAL \bu\ud \bx =& \ A_1 b(\bu,\bef_{ \rho},\bu) + A_2 b(\bu,\bef_{ \rho},\bu) + A_3 b(\bu,\bu,\bef_{ \rho})   + A_4 b(\bef_{ \rho},\bu,\bu) \\ & + A_5 ( (\DIV \bef_{ \rho} ) \bu , \bu) - \frac{1}{2} ( (\DIV \bef_{\rho} ) \bu ,\bu ) + \l( \DIV \l( \mu \l( \GRAD \bu + (\GRAD \bu)^\top \r)  \r) , \bu \r) , \\
= & \ A_1 b(\bu,\bef_{ \rho},\bu) + A_2 b(\bu,\bef_{ \rho},\bu) - A_3 b(\bu,\bef_{ \rho}, \bu) - \frac{1}{2} A_4 ( ( \DIV \bef_{ \rho})\bu,\bu) \\ &  + A_5 ( (\DIV \bef_{ \rho} ) \bu , \bu) - \frac{1}{2} ( (\DIV \bef_{\rho} ) \bu ,\bu ) - \l(  \mu \l( \GRAD \bu + (\GRAD \bu)^\top \r)  , \GRAD \bu \r),
\end{split}
\end{equation}
by using integration by parts on the $\mu$ term and using \eqref{eq:IBP1} on the $A_3$ term and \eqref{eq:IBP2} on the $A_4$ term. Using that the contraction between a symmetric and anti-symmetric matrix is zero \citep[Ch 11.2.1]{Larson_2013}, \ie $(A + A^\top ): (A - A^\top) = 0$ where $A$ is a square matrix, one can show that
\begin{equation} \label{eq:sym}
\begin{split}
\l(  \mu \l( \GRAD \bu + \l( \GRAD \bu \r)^\top \r) , \GRAD \bu \r) =& \frac{1}{2}  \l(  \mu \l( \GRAD \bu +  \l( \GRAD \bu \r)^\top \r), \GRAD \bu + (\GRAD \bu)^\top \r) \\ &+  \frac{1}{2} \l(  \mu \l( \GRAD \bu + \l( \GRAD \bu \r)^\top \r), \GRAD \bu - (\GRAD \bu)^\top \r) \\ =& \frac{1}{2}  \l(  \mu \l( \GRAD \bu + \l( \GRAD \bu \r)^\top \r) , \GRAD \bu + (\GRAD \bu)^\top \r).
\end{split}
\end{equation}
Inserting \eqref{eq:sym} into \eqref{eq:thm_part4} gives
\begin{equation} 
\begin{split} \label{eq:thm_part5}
\frac{1}{2} \p_t \int_\Omega \rho \bu \SCAL \bu\ud \bx =& \ A_1 b(\bu,\bef_{ \rho},\bu) + A_2 b(\bu,\bef_{ \rho},\bu) - A_3 b(\bu,\bef_{ \rho}, \bu) - \frac{1}{2} A_4 ( ( \DIV \bef_{ \rho})\bu,\bu) \\ &  + A_5 ( (\DIV \bef_{ \rho} ) \bu , \bu) - \frac{1}{2} ( (\DIV \bef_{\rho} ) \bu ,\bu ) -  \frac{1}{2} \| \sqrt{\mu} \l( \GRAD \bu + (\GRAD \bu)^\top \r) \|^2.
\end{split}
\end{equation}

From \eqref{eq:thm_part5}, the conditions necessary for kinetic energy to be dissipated are
\begin{equation}
A_1 + A_2 - A_3 = 0 \text{ and } A_5 - \frac{1}{2} A_4 = \frac{1}{2}.
\end{equation}
\noindent \\
\textbf{Momentum:} Repeating similar steps on \eqref{eq:nse_regularized} as in the proof of Theorem \ref{theorem div} and assuming $\DIV \bu = 0$ yields
\begin{equation} \label{eq:thm_mom_part1}
\p_t \int_\Omega \bbm_i\ud \bx = ( \new{\p_t} \bbm  , \be_i) =   \l( \DIV \l( \mu \l( \GRAD \bu + (\GRAD \bu)^\top \r)  \r) , \be_i \r) + ( \bef_{\bbm} , \be_i ),
\end{equation}
where only the viscous terms determine if momentum is conserved. Performing integration by parts on the $\mu$ term yields
\begin{equation} \label{eq:mom_step1}
\p_t \int_\Omega \bbm_i\ud \bx =   A_1 b(\be_i,\bef_{ \rho},\bu) + A_2 b(\bu,\bef_{ \rho},\be_i) + A_3 b(\be_i,\bu,\bef_{ \rho}) + A_4 b(\bef_{ \rho},\bu,\be_i)  + A_5 ( (\DIV \bef_{ \rho} ) \bu , \be_i).
\end{equation}

Using \eqref{eq:IBP1} on the $A_3$, $A_2$ and $A_4$ terms inside \eqref{eq:mom_step1} gives
\begin{equation}
\p_t \int_\Omega \bbm_i\ud \bx = A_1 b(\be_i,\bef_{ \rho},\bu)  - A_3 b(\be_i,\bef_{ \rho},\bu) - A_4 ( (\DIV\bef_{ \rho}) \bu,\be_i)  + A_5 ( (\DIV \bef_{ \rho} ) \bu , \be_i),
\end{equation}
which leads to the conditions necessary for momentum to be conserved
\begin{equation}
A_1 - A_3 = 0 \text{ and } A_5 - A_4 = 0.
\end{equation}
\noindent \\
\textbf{Angular momentum:} Repeating similar steps on \eqref{eq:nse_regularized} as in the proof of Theorem \ref{theorem div} and assuming $\DIV \bu = 0$ yields
\begin{equation} \label{eq:thm_ang_part1}
 \p_t \int_\Omega (\bbm \CROSS \bx)_i\ud \bx = \l(\DIV \l( \mu \l( \GRAD \bu + (\GRAD \bu)^\top \r)  \r) , \bphi_i \r) + ( \bef_{\bbm} , \bphi_i ).
\end{equation}

Performing integration by parts on the $\mu$ term yields
\begin{equation} 
\p_t \int_\Omega (\bbm \CROSS \bx)_i\ud \bx  = -(  \mu \l( \GRAD \bu + (\GRAD \bu)^\top \r)   , \GRAD \bphi_i) + ( \bef_{\bbm} , \bphi_i ).
\end{equation}

Next, using that the contraction between a symmetric and anti-symmetric matrix is zero \citep[Ch 11.2.1]{Larson_2013}, one can show that
\begin{equation} \label{eq:symmetry_trick}
\begin{split}
\l( \mu   \l( \GRAD \bu +  (\GRAD \bu)^\top \r) , \GRAD \bphi_i \r) =&   \l( \frac{1}{2} \mu  \l( \GRAD \bu +  \l( \GRAD \bu \r)^\top \r), \GRAD \bphi_i + (\GRAD \bphi_i)^\top \r) \\ &+   \l( \frac{1}{2} \mu  \l( \GRAD \bu + \l( \GRAD \bu \r)^\top \r), \GRAD \bphi_i - (\GRAD \bphi_i)^\top \r) \\ =&   \l( \frac{1}{2} \mu  \l( \GRAD \bu + \l( \GRAD \bu \r)^\top \r) , \GRAD \bphi_i + (\GRAD \bphi_i )^\top \r).
\end{split}
\end{equation}
Since $\GRAD \bphi_i = - \GRAD \bphi_i^\top$ we are left with
\begin{equation} 
\begin{split}
\p_t \int_\Omega (\bbm \CROSS \bx)_i\ud \bx = & \ ( \bef_{\bbm} , \bphi_i ) =  A_1 b(\bphi_i,\bef_{ \rho},\bu) + A_2 b(\bu,\bef_{ \rho},\bphi_i) \\ & + A_3 b(\bphi_i,\bu,\bef_{ \rho}) + A_4 b(\bef_{ \rho},\bu,\bphi_i)  + A_5 ( (\DIV \bef_{ \rho} ) \bu , \bphi_i).
\end{split}
\end{equation}

Using \eqref{eq:IBP1} on the $A_3$, $A_2$ and $A_4$ terms and using that $\DIV \bu = \DIV \bphi_i = 0$ gives
\begin{equation} 
\begin{split}
\p_t \int_\Omega (\bbm \CROSS \bx)_i\ud \bx =& \ A_1 b(\bphi_i,\bef_{ \rho},\bu) -A_2 b ( \bu, \bphi_i,  \bef_{ \rho} )  - A_3 b(\bphi_i,\bef_{ \rho},\bu) \\  &- A_4 b(\bef_{ \rho},\bphi_i,\bu) - A_4 ( (\DIV\bef_{ \rho}) \bu,\bphi_i)  + A_5 ( (\DIV \bef_{ \rho} ) \bu , \bphi_i).
\end{split}
\end{equation}

Next, using the definition of the trilinear form \eqref{eq:trilinear definition} yields
\begin{equation}
\begin{split}
\p_t \int_\Omega (\bbm \CROSS \bx)_i\ud \bx   = & \ A_1 b(\bphi_i,\bef_{ \rho},\bu) -A_2  \l( (\GRAD \bphi_i)^\top \bu,  \bef_{ \rho} \r)  - A_3 b(\bphi_i,\bef_{ \rho},\bu) \\& - A_4 ( (\GRAD  \bphi_i)\bu, \bef_{ \rho}) - A_4 ( (\DIV\bef_{ \rho}) \bu,\bphi_i)  + A_5 ( (\DIV \bef_{ \rho} ) \bu , \bphi_i),
\end{split}
\end{equation}
which leads to the conditions necessary for angular momentum to be conserved
\begin{equation}
A_1 - A_3 = 0 \text{ and } A_5 - A_4 = 0 \text{ and } A_2   =   A_4,
\end{equation}
since $\GRAD \bphi_i + (\GRAD \bphi_i)^\top  = 0 $.

\end{proof}

\subsection{Discussion.}
One implication of Theorem \ref{theorem:momentum-viscous-flux} is that there is no viscous regularization $(\kappa > 0)$ that simultaneously dissipates kinetic energy and conserves angular momentum. Another implication is that a unique viscous regularization dissipates kinetic energy and conserves momentum. This can be seen by combining the kinetic energy and the momentum conditions in Theorem \ref{theorem:momentum-viscous-flux} to arrive at $A_5 = A_4 = 1$, $A_2 = 0$ and $A_1 = A_3$. Setting these conditions inside \eqref{eq:general_viscous_flux} and performing some simplifications lead to the following viscous flux
\begin{equation} \label{eq:guermond_popov_flux}
\bef_{GP} :=  (\GRAD \bu)^\top \bef_\rho + (\DIV \bef_\rho) \bu + A_1 (\GRAD \bef_\rho) \bu + A_1 (\GRAD \bu ) \bef_\rho =   \DIV ( \bef_\rho \otimes \bu ) + A_1  \GRAD ( \bef_\rho \SCAL \bu ),
\end{equation}
where the second term does not change the model since it only results that a modified pressure is solved for, \ie $P = p - A_1 \bef_\rho \SCAL \bu $, and does not affect the behavior of density and velocity. Since that term is not important, we are left with the so-called Guermond-Popov viscous flux \citep{Guermond_Popov2014}. Compared to \eqref{eq:guermond_popov_flux}, the Guermond-Popov flux \citep{Guermond_Popov2014} is simplified since it now is in the incompressible setting instead of the compressible setting. This means that, if we only consider the density and momentum contribution of \citep{Guermond_Popov2014}, it is exactly the same as \eqref{eq:guermond_popov_flux} and \eqref{eq:nse_regularized}. 


Combining the last two conditions of Theorem \ref{theorem:momentum-viscous-flux} leads to a family of viscous regularizations that conserve momentum and angular momentum. Below two examples of these are given
\begin{align}
&\bef_0 := 0, \\
&  \new{ \bef_{KS} :=     \DIV (   \bef_\rho \otimes \bu + \bu \otimes \bef_\rho  ), } \label{eq:GP_S}
\end{align}
where $\bef_0$ has previously been used in the variable density flow literature \citep{Bartholomew_2019}. \new{From Ficks law, \cite{Kaz_Smag_1977} derived and proposed \eqref{eq:GP_S}. They showed kinetic energy dissipation assuming that $\kappa$ is constant and
\begin{equation} \label{eq:dissipation_condition}
\kappa < \frac{2 \mu}{ \max_\Omega \rho - \min_\Omega \rho }. 
\end{equation}
It is not clear if \eqref{eq:dissipation_condition} is true for all fluid pairs at all temperatures. For example, consider hydrogen gas $\text{H}_2$ and sulfur hexafluoride gas $\text{SF}_6$ at room temperature. Values of dynamic viscosity, density and pairwise mass diffusivity are given in Table \ref{table:counter_example}. Inserting these values into \eqref{eq:dissipation_condition} shows an order of magnitude violation:
\begin{equation}
\begin{split}
\frac{2 \mu}{ \max_\Omega \rho - \min_\Omega \rho } &= \frac{2 \cdot 15.3 \cdot 10^{-6}}{6.07 - 0.0838 } \text{ [m$^2$/sec]} \\
 & = 5.1 \cdot 10^{-6} \text{ [m$^2$/sec],} \\ \\
\kappa =  4.12 \cdot 10^{-5}\text{ [m$^2$/sec]} &\nless 5.1 \cdot 10^{-6} \text{ [m$^2$/sec].}
\end{split}
\end{equation}
}
\begin{table}[H]
\caption{\new{Experimental values from \citep{ref_values_2015} of dynamic viscosity at 300 K, 1 kPa and pairwise mass diffusion at 20 \textcelsius, 1 atm. Density from ideal gas law.}} \label{table:counter_example} \centering
\begin{tabular}{c|ccc}
\toprule & $\mu$ [Pa $\cdot$ sec] & $\rho$ [kg/m$^3$] & $\kappa$ [m$^2$/sec] \\ \midrule
$\text{H}_2$ & $8.9 
\cdot 10^{-6}$ &  0.0838
& $4.12 \cdot 10^{-5}$ \\
$\text{SF}_6$ & $15.3 \cdot 10^{-6}$ & 6.07 
& $4.12 \cdot 10^{-5}$ \\ \bottomrule
\end{tabular}
\end{table} 










\section{Fully discrete conservative method.} \label{Sec:fully_discrete}
In this section, we describe the fully discrete method approximating the incompressible Navier-Stokes equations \eqref{eq:nse_regularized}. Let $(\rho^n, \bu^n, P^n) \in \commentA{ \calM \CROSS \mathbf{\bcalV} \CROSS \calQ }$ be solutions at time $t_n$, where the time-levels are defined as $0 = t_0 <t_1 <\dots < t_{N_t} = T$ and ${N_t}$ denotes the total number of time-levels.  Let $\Delta t_{n+j} := t_{n+j+1} - t_{n+j}$ denote the time step. We also define $\bu^{n+1/2} :=  \frac{\bu^n + \bu^{n+1}}{2} $. 


\commentC{We propose using a second-order accurate modified Crank-Nicolson method in time.} Applying this method on \eqref{eq:mom_update_general} yields: Given $\l(\rho^{n}, \bu^{n}\r)$, find $\l(\rho^{n+1}, \bu^{n+1}, P^{n+1/2}\r) \in\commentA{ \calM \CROSS \mathbf{\bcalV} \CROSS \calQ }$ such that


\begin{equation} \label{eq:full_time_disctetization}
\begin{alignedat}{2}
\l ( \frac{ \rho^{n+1} - \rho^n}{\Delta t_n} + \bu^{n+1/2} \SCAL \GRAD \rho^{n+1/2} + \alpha_\rho \l( \DIV \bu^{n+1/2} \r) \l(  \rho^{n+1/2} - \Bar{\rho}^{n+1/2} \r) , w \r) \\  + \l( \kappa \GRAD \rho^{n+1/2} , \GRAD w \r)  &= 0, \quad &&  \forall w \in \calM ,
\\
\l( \frac{\bbm^{n+1} - \bbm^{n}}{\Delta t_n}, \bv \r) +  b\l(\bu^{n+1/2}, \bbm^{n+1/2}, \bv\r) 
\\ 
+ \alpha_\bbm  \l(\l( \DIV \bu^{n+1/2}  \r) \bbm^{n+1/2}, \bv\r)  + \l( \GRAD P^{n+1/2}, \bv \r) 
\\ 
+ \l( \alpha_P - \frac{1}{2} \r) b\l(\bv, \bbm^{n+1/2}, \bu^{n+1/2}\r) 
\\
 + \l( \alpha_P + \frac{1}{2} \r) b\l(\bv, \bu^{n+1/2}, \bbm^{n+1/2}\r)    + \frac{\alpha_\rho}{2} \l(    \GRAD \l(  \bbm^{n+1/2} \SCAL \bu^{n + 1/2} \r) , \bv \r) 
\\ 
+ \frac{1}{2} \l(  \l(  \bv \SCAL \GRAD \rho^{n+1/2} , \commentC{ \alpha_{\uu} \uuDGn + (1 - \alpha_{\uu}) \uum } \r) \r . \\ \l . - \alpha_\rho \l(   \GRAD \l( \l(  \rho^{n+1/2} - \Bar{\rho}^{n+1/2} \r)   \commentC{  \l( \alpha_{\uu} \uuDGn + (1 - \alpha_{\uu}) \uum \r) } \r) , \bv \r) \r) 
\\ 
+ \l( \mu \l( \GRAD \bu^{n+1/2} + \l( \GRAD \bu^{n+1/2} \r)^\top  \r), \GRAD \bv \r) 
\\ 
-\l(\bef^{n+1/2}, \bv\r) - \l(\bef_{\bbm} \l( \bu^{n+1/2} , \bef_\rho^{n+1/2} \r) , \bv\r)
& =0 , \quad && \forall \bv \in \bcalV ,
\\
\l( \DIV \bu^{n+1/2} , q \r) &= 0, \quad && \forall q \in \calQ.
\end{alignedat}
\end{equation}
\commentC{Note that the term $\alpha_{\uu} \uuDG + (1 - \alpha_{\uu}) \uu $ has been discretized as $\alpha_{\uu} \uuDGn + (1 - \alpha_{\uu}) \uum $. We note that $\alpha_{\uu}=0$ correspond to a standard Crank-Nicolson discretization of \eqref{eq:mom_update_general} whereas $\alpha_{\uu}=1$ corresponds to a modified Crank-Nicolson method as proposed by \cite{Gawlik2020}.}


\subsubsection{Properties of the fully discrete method.}
This section is a fully discrete analog of Section \ref{Sec:semi-discrete properties} and aims to verify that the fully discrete method satisfies the properties of Table \ref{table:emac_mom}. \commentC{Provided that the conditions of Table \ref{table:emac_mom} are fulfilled, all of the properties are retained at the fully discrete level.}



\begin{proposition}
\commentC{The properties of Table \ref{table:emac_mom} hold at the fully discrete level for \eqref{eq:full_time_disctetization} provided that the conditions inside the table are fulfilled.}
\end{proposition}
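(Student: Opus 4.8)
The plan is to mimic, row by row, the proof of Theorem~\ref{theorem div}, replacing the semi-discrete time derivative by the backward difference $\tfrac{1}{\Delta t_n}((\cdot)^{n+1}-(\cdot)^n)$, every state quantity by its midpoint value, and (as in Theorem~\ref{theorem div}) taking $\mu=\kappa=\bef=\bef_{\bbm}=0$ so that only the inviscid rows of Table~\ref{table:emac_mom} are in question. All of the bookkeeping rests on two elementary discrete product rules for arithmetic means: with $\rho^{n+1/2}:=\tfrac12(\rho^n+\rho^{n+1})$ and $\bu^{n+1/2}:=\tfrac12(\bu^n+\bu^{n+1})$,
\[
(\rho^{n+1}-\rho^n)\,\rho^{n+1/2}=\tfrac12\big((\rho^{n+1})^2-(\rho^n)^2\big),\qquad
\bbm^{n+1}-\bbm^{n}=\rho^{n+1/2}(\bu^{n+1}-\bu^n)+(\rho^{n+1}-\rho^n)\,\bu^{n+1/2},
\]
and the second one, after a short computation, gives
\begin{equation}\label{eq:prop-ke}
\tfrac12\big(\rho^{n+1}\,|\bu^{n+1}|^2-\rho^{n}\,|\bu^{n}|^2\big)=(\bbm^{n+1}-\bbm^n)\SCAL\bu^{n+1/2}-\tfrac12(\rho^{n+1}-\rho^n)\,\uumm .
\end{equation}
Identity~\eqref{eq:prop-ke} is exactly why $\uu$ is discretized as $\uumm$, not $\uum$, in \eqref{eq:full_time_disctetization}: the cross term generated by the discrete product rule is $\bu^n\SCAL\bu^{n+1}$.

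The mass, shift-invariance and squared-density rows should then transfer verbatim. For mass, I would take $w=1\in\calM$ in the density equation of \eqref{eq:full_time_disctetization}, integrate by parts, and invoke the weak constraint $(\DIV\bu^{n+1/2},q)=0$ exactly as in Theorem~\ref{theorem div}. Shift invariance is a property of the scheme itself: $\rho^{n+1/2}-\Bar{\rho}^{n+1/2}$, $\GRAD\rho^{n+1/2}$ and $\tfrac{1}{\Delta t_n}(\rho^{n+1}-\rho^n)$ are all invariant under $\rho^n\mapsto\rho^n+c$, $\rho^{n+1}\mapsto\rho^{n+1}+c$, and the shift-dependent part is absorbed into $P^{n+1/2}$. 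For the three squared-density rows, I would test the density equation with $w=\rho^{n+1/2}-\Bar{\rho}^{n+1/2}$ (respectively $w=\rho^{n+1/2}-A$, with $A$ the constant found symbolically in the proof of Theorem~\ref{theorem div}, in the $2k_\rho\le k_P$ case), use the first discrete product rule on the difference quotient and \eqref{eq:advection_IBP2} on the advective term; the surviving $(\DIV\bu^{n+1/2})$-term either vanishes for $\alpha_\rho=\tfrac12$ or is annihilated by the weak divergence-free constraint once $k_\rho\le k_P$ (respectively $2k_\rho\le k_P$) places the relevant integrand in $\calQ$. One only notes in passing that $2k_\rho\le k_P$ already forces mass conservation, so $\int_\Omega\rho^{n+1}\ud\bx=\int_\Omega\rho^{n}\ud\bx$ and the $A$-term drops, leaving $\int_\Omega(\rho^{n+1})^2\ud\bx=\int_\Omega(\rho^{n})^2\ud\bx$.

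The kinetic-energy row is the substantive one. By \eqref{eq:prop-ke} the energy increment equals $\big(\tfrac{1}{\Delta t_n}(\bbm^{n+1}-\bbm^n),\bu^{n+1/2}\big)-\tfrac12\big(\tfrac{1}{\Delta t_n}(\rho^{n+1}-\rho^n),\uumm\big)$. Into the momentum equation of \eqref{eq:full_time_disctetization} I would feed $\bv=\bu^{n+1/2}$, and into the density equation $w=\uuDGn$, the fully discrete analogue of the $L^2$-projection \eqref{eq:projection_special} taken relative to $\uumm$ and $\bar{\calM}_{k_\bu+k_\rho-1}$ (see \eqref{eq:dg_space_definition}). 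Because $\tfrac{1}{\Delta t_n}(\rho^{n+1}-\rho^n)\in\calM\subset\bar{\calM}_{k_\bu+k_\rho-1}$, the factor $\uumm$ in the energy increment may be replaced by $\uuDGn$; because $\bu^{n+1/2}\SCAL\GRAD\rho^{n+1/2}+\alpha_\rho(\DIV\bu^{n+1/2})(\rho^{n+1/2}-\Bar{\rho}^{n+1/2})\in\bar{\calM}_{k_\bu+k_\rho-1}$, the same replacement is available in the $\uumm$-term coming from the momentum equation after integration by parts of its divergence form. Integration by parts on the pressure term (using the weak constraint), \eqref{eq:IBP1} applied to $b(\bu^{n+1/2},\bbm^{n+1/2},\bu^{n+1/2})$ together with the $(\alpha_P-\tfrac12)$- and $(\alpha_P+\tfrac12)$-trilinear contributions, and integration by parts on $\tfrac{\alpha_\rho}{2}\GRAD(\bbm^{n+1/2}\SCAL\bu^{n+1/2})$, then reproduce the semi-discrete cancellations of Theorem~\ref{theorem div}: all $\uumm$-terms cancel, and what remains is $(\alpha_P+\tfrac12-\alpha_\bbm+\tfrac{\alpha_\rho}{2})\,((\DIV\bu^{n+1/2})\bbm^{n+1/2},\bu^{n+1/2})$, which is zero precisely when $\alpha_\bbm-\alpha_P-\alpha_\rho/2=1/2$.

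The hard part will be this last step: I have to check that $\uuDGn$ can be chosen simultaneously to lie in $\calM$, so that it is an admissible test function in the density equation, and to reproduce the moments of $\uumm$ against all of $\bar{\calM}_{k_\bu+k_\rho-1}$, so that both replacements above are legitimate — i.e.\ that the construction underlying \eqref{eq:projection_special} in the proof of Theorem~\ref{theorem div} transfers to $\uumm$. The same analysis also explains why momentum and angular momentum must be excluded: testing the momentum equation of \eqref{eq:full_time_disctetization} with $\bv=\be_i$ (respectively $\bv=\bphi_i$) and proceeding as in Theorem~\ref{theorem div}, via \eqref{eq:IBP1}, \eqref{eq:IBP2} and \eqref{eq:move_rho}, produces the source contribution $-\tfrac12(\be_i\SCAL\GRAD\rho^{n+1/2},\uumm)$, whereas the term it must cancel against is $-\tfrac12(\be_i\SCAL\GRAD\rho^{n+1/2},\uum)$; since $\uumm\ne\uum$ a residual proportional to $(\be_i\SCAL\GRAD\rho^{n+1/2},\uum-\uumm)$ survives, and momentum, hence angular momentum, is lost. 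Restoring these would instead require discretizing $\uu$ as $\uum$, which comes at the cost of fully discrete kinetic-energy conservation through \eqref{eq:prop-ke} — consistent with the remark preceding the statement.
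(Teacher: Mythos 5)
Your proposal follows essentially the same route as the paper: the discrete product rule you write as your key identity is exactly the Gawlik identity \eqref{eq:gawlik_identity_} the paper invokes, the mass/shift-invariance/squared-density rows are handled by the same substitutions ($w=\rho^{n+1/2}-\Bar{\rho}^{n+1/2}$, $w=\rho^{n+1/2}-A$), the kinetic-energy row uses the same midpoint test function together with the projection \eqref{eq:projection_special} applied to $\uumm$, and your residual $\tfrac12(\be_i\SCAL\GRAD\rho^{n+1/2},\uum-\uumm)$ is precisely the obstruction the paper records in \eqref{eq:momentum_full_proof} and \eqref{eq:angular momentum_full_proof}. The admissibility concern you flag about the projection is inherited from the paper's own definition \eqref{eq:projection_special} and is not an additional gap in your argument.
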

\noindent \\
\textbf{Mass:}
Repeating the steps of the proof of Theorem \ref{theorem div} on \eqref{eq:full_time_disctetization} yields
\begin{equation}
\int_\Omega \rho^{n+1}\ud \bx = \int_\Omega \rho^{n}\ud \bx + \Delta t_n \l( 1 -\alpha_\rho  \r) \l( \DIV \bu^{n+1/2} ,  \rho^{n+1/2} - \Bar{\rho}^{n+1/2}  \r),
\end{equation}
and similar steps can be performed to show that $\l( \rho^{n+1/2} -  \Bar{\rho}^{n+1/2} \r) \in \calQ$ provided that $k_\rho \leq k_P$. Mass is conserved if $k_\rho \leq k_P$ or if $\alpha_\rho = 1$.

\noindent \\
\textbf{Squared density:}
Setting $w = \rho^{n+1/2} -  \Bar{\rho}^{n+1/2}$ inside \eqref{eq:full_time_disctetization} and repeating the steps of the proof of Theorem \ref{theorem div} yields
\begin{equation}
\l(\rho^{n+1} - \rho^n, \rho^{n+1/2} - \Bar{\rho}^{n+1/2} \r) = \Delta t_n  \l( \frac{1}{2} - \alpha_\rho \r) \l( \l(\DIV \bu^{n+1/2}\r) \l(\rho^{n+1/2} - \Bar{\rho}^{n+1/2} \r) , \rho^{n+1/2} - \Bar{\rho}^{n+1/2} \r) .
\end{equation}
Simplifying the left-hand side finally yields
\begin{equation}
\begin{split}
& \frac{1}{2} \| \rho^{n+1} \|^2 - \l(\rho^{n+1},   \Bar{\rho}^{n+1/2} \r)    =  \frac{1}{2} \| \rho^n \|^2 - \l(\rho^n,   \Bar{\rho}^{n+1/2} \r) \\ & + \Delta t_n  \l( \frac{1}{2} - \alpha_\rho \r) \l( \l(\DIV \bu^{n+1/2}\r) \l(\rho^{n+1/2} - \Bar{\rho}^{n+1/2} \r) , \rho^{n+1/2} - \Bar{\rho}^{n+1/2} \r) ,
\end{split}
\end{equation}
which is a fully discrete equivalent of \eqref{eq:sq_proof1}. If $\Bar{\rho}^{n+1/2} = \frac{1}{|\Omega|} \int_\Omega \rho^{n+1/2}\ud \bx  $ then
\begin{equation}
\frac{1}{2} \| \rho^{n+1} \|^2 - \l(\rho^{n+1},   \Bar{\rho}^{n+1/2} \r)  - \l(  \frac{1}{2} \| \rho^n \|^2 - \l(\rho^n,   \Bar{\rho}^{n+1/2} \r) \r),
\end{equation}
simplifies to
\begin{equation}
\frac{1}{2} \l( \| \rho^{n+1} \|^2 -  \l( \int_\Omega  \rho^{n+1}\ud \bx \r)^2 \r)    - \frac{1}{2} \l(   \| \rho^n \|^2 -  \l( \int_\Omega  \rho^{n}\ud \bx \r)^2 \r),
\end{equation}
which is a fully discrete equivalent of \eqref{eq:sq_proof2}.

\new{Lastly, similar steps can be performed to show that for $A \in \mR$
\begin{equation}
\begin{split}
& \frac{1}{2} \| \rho^{n+1} \|^2 - \l(\rho^{n+1},   A \r)    =  \frac{1}{2} \| \rho^n \|^2 - \l(\rho^n,   A \r) \\ & + \Delta t_n   \l( \DIV \bu^{n+1/2}, \frac{1}{2} \l(\rho^{n+1/2} - A \r)^2  - \alpha_\rho \l( \rho^{n+1/2} - \Bar{\rho}^{n+1/2} \r) \l( \rho^{n+1/2} - A \r) \r) .
\end{split}
\end{equation}
There exists an $A$ such that $\l( \frac{1}{2} \l(\rho^{n+1/2} - A \r)^2  - \alpha_\rho \l( \rho^{n+1/2} - \Bar{\rho}^{n+1/2} \r) \l( \rho^{n+1/2} - A \r) \r) \in \calQ$ provided that $2 k_\rho \leq k_P$. This means that squared density is conserved if $2 k_\rho \leq k_P$ by using the weak divergence-free condition \eqref{eq:full_time_disctetization} and that mass is conserved.}

\noindent \\
\textbf{Shift invariance:}
Substituting $\rho^{n+j}$ with $\rho^{n+j} + c$ and repeating the steps of the proof of Theorem \ref{theorem div} shows that the density update \eqref{eq:full_time_disctetization} is invariant if $\alpha_\rho = 0$ or if $\Bar{\rho}^{n+1/2} = \frac{1}{|\Omega|} \int_\Omega \rho^{n+1/2}\ud \bx  $.
\noindent \\
\textbf{Kinetic energy:}
We begin by using the identity from \citep[Sec 3]{Gawlik2020} and the definition of the \commentC{$L^2$-projection \eqref{eq:uu_projection}} to obtain
\begin{equation} \label{eq:gawlik_identity_}
\begin{split}
\frac{1}{2} \| \sqrt{\rho^{n+1}} \bu^{n+1} \|^2 - \frac{1}{2} \| \sqrt{\rho^{n}} \bu^{n} \|^2 &= \l( \rho^{n+1} \bu^{n+1} - \rho^n \bu^n, \bu^{n+1/2} \r) - \frac{1}{2} \l(\rho^{n+1} - \rho^n, \bu^n \SCAL \bu^{n+1}\r) \\
&= \l( \rho^{n+1} \bu^{n+1} - \rho^n \bu^n, \bu^{n+1/2} \r) - \frac{1}{2} \l(\rho^{n+1} - \rho^n, \overline{\uumm} \r).
\end{split}
\end{equation}
Next, repeating the steps of the proof of Theorem \ref{theorem div} on \eqref{eq:full_time_disctetization} leads to
\begin{equation}
\begin{split}
\frac{1}{2} \l( \| \sqrt{\rho^{n+1}} \bu^{n+1} \|^2 -  \| \sqrt{\rho^{n}} \bu^{n} \|^2 \r) =  \Delta t_n \l(\commentA{ \alpha_P + \frac{1}{2} - \alpha_\bbm + \frac{1}{2} \alpha_\rho  } \r) \l( \l( \DIV \bu^{n+1/2} \r) \bbm^{n+1/2}, \bu^{n+1/2}\r) \\
\\ 
\commentA{- \frac{1}{2} \l( \bu^{n+1/2} \SCAL \GRAD \rho^{n+1/2}  + \alpha_\rho  (\DIV \bu^{n+1/2}) ( \rho^{n+1/2} - \Bar{\rho}^{n+1/2} ) , \commentC{ \alpha_{\uu} \uuDGn + (1 - \alpha_{\uu}) \uum - \uuDGn } \r)} \\ 
\end{split}
\end{equation}
which shows that kinetic energy is conserved if $\alpha_\bbm - \alpha_P - \alpha_\rho/2 = 1/2$ \commentC{ and if $\alpha_{\uu}=1$}.
\noindent \\
\textbf{Momentum:}
Setting $\bv = \be_i$ inside \eqref{eq:full_time_disctetization} and repeating the steps of the proof of Theorem \ref{theorem div} yields

\begin{equation} \label{eq:momentum_full_proof}
\begin{split}
 \int_\Omega \bbm_i^{n+1}  \ud\bx = \int_\Omega \bbm_i^{n}  \ud\bx + \commentA{\Delta t_n} \l(1-\alpha_\bbm  \r) \l( \l( \DIV \bu^{n+1/2} \r) \bbm^{n+1/2}, \be_i\r) \\ +  \frac{1}{2} \commentA{\Delta t_n}  \l( \be_i \SCAL \GRAD \rho^{n+1/2},   \commentC{ \uum  - \alpha_{\uu} \uuDGn - (1 - \alpha_{\uu}) \uum } \r)    .
\end{split}
\end{equation}
\commentC{Momentum is conserved if $\alpha_{\bbm} = 1$ and if $\alpha_{\uu} = 0$.}
\noindent \\
\textbf{Angular momentum:}
Setting $\bv = \bphi_i$ inside \eqref{eq:full_time_disctetization} and repeating the steps of the proof of Theorem \ref{theorem div} yields

\begin{equation} \label{eq:angular momentum_full_proof}
\begin{split}
 \int_\Omega \l(\bbm  \CROSS \bx\r)_i^{n+1}  \ud\bx = \int_\Omega \l(\bbm  \CROSS \bx\r)_i^{n}  \ud\bx + \commentA{\Delta t_n} \l(1-\alpha_\bbm  \r) \l( \l( \DIV \bu^{n+1/2} \r) \bbm^{n+1/2}, \bphi_i\r) \\ +  \frac{1}{2} \commentA{\Delta t_n}  \l( \bphi_i \SCAL \GRAD \rho^{n+1/2}, \commentC{ \uum  - \alpha_{\uu} \uuDGn - (1 - \alpha_{\uu}) \uum } \r)    .  
\end{split}
\end{equation}
\commentC{Angular momentum is conserved if $\alpha_{\bbm} = 1$ and if $\alpha_{\uu} = 0$.}

\subsection{Adaptive time-stepping}



We use a simple adaptive time-stepping algorithm based on the CFL condition. Given user-defined parameters \commentA{CFL}, $s_{\min}$ and $s_{\max}$, the next time step is computed using the following adaptive algorithm:

\begin{algorithm}[H]
\caption{A simplified version of the time step control algorithm from \citep[Sec 5.4]{Guermond_Minev_2019} used to compute the next time step $\Delta t_n$ given user-defined parameters \commentA{CFL,} $s_\text{max}, s_\text{min}$.}\label{algorithm_constant_density}
\tcc{Compute time step increment based on CFL condition}
$s_\text{cfl} = \textrm{CFL} \min_{\bx \in \Omega} h(\bx) / \l(\| \bu^n \|_{l^2} \Delta t_{n-1} \r)$

\tcc{Make sure the next time step $\Delta t_n$ is bounded by $s_\text{max} \Delta t_{n-1}$}
$s = \min{ \l( s_\text{cfl},  s_\text{max} \r) }$

$\Delta t_{n} = s \Delta t_{n-1}$

\If{$s < s_\text{min}$}{Repeat previous time step with $\Delta t_{n}$ instead}

\Return{Time step $\Delta t_{n}$ and flag whether to repeat the time step or not}
\end{algorithm}

\section{Numerical validation.} \label{Sec:gresho_sections}
The goal of this section is to numerically compare the formulations from Table \ref{table:formulations_of_interest}. The table includes newly developed formulations and formulations previously known from the literature. Another goal is to numerically verify the viscous regularization from Section \ref{Sec:viscous_regularization}. The nonlinear system is solved with the relative tolerance of $10^{-14}$. Our code is implemented in FEniCS \citep{fenics2015}, an open-source finite element library. We perform experiments using high-order Lagrange elements in space. More specifically, the SI-MEDMAC uses $(\rho, \bu, P)\in\commentA{  \polP_2 \CROSS \polP_3 \CROSS \polP_2 }$  and the other formulations uses $(\rho, \bu, P)\in\commentA{  \polP_3 \CROSS \polP_3 \CROSS \polP_2 }$. \commentC{In all experiments, we use $\alpha_{\uu} = 1$.}

\newcommand \greshoFigWidth{.5}
\newcommand \errorScale{.5}
\newcommand \greshoScale{.56}
\newcommand \legendScale{0.9}


\subsection{Accuracy test.} \label{Sec:accuracy test}
In this section, we verify the accuracy of the proposed formulations using a manufactured solution on a unit disk. The time step is computed using CFL = 0.025 and the initial time step is used throughout the whole simulation. A small time step is chosen so that the error in time is negligible. Since the error will be dominated by spatial errors, the expected convergence rate is then 4 for velocity and 3 for pressure. The expected rate for density is 4 for all formulations except SI-MEDMAC where we expect a rate of 3. We follow the setup from \citep{Guermond_Salgado_2009} where the forcing function $\bef$ is chosen to obtain the following exact solution
\begin{equation}
\begin{split}
&\rho(\bx,t) = 2 + x \cos( \sin(t) ) + y \sin ( \sin(t) ),\\
&\bu(\bx,t) =  \begin{bmatrix}
-y \cos(t) \\
x \cos(t)
\end{bmatrix}, \\
&p(\bx,t) = \sin(x) \sin(y) \sin(t), \\
& P(\bx, t) = p - \alpha_\rho \rho \bu \SCAL \bu - \frac{1}{2} \alpha_\rho \Bar{\rho} \bu \SCAL \bu - \frac{1}{ |\Omega| } \int_\Omega  \l( p - \alpha_\rho \rho \bu \SCAL \bu - \frac{1}{2} \alpha_\rho \Bar{\rho} \bu \SCAL \bu \r)\ud \bx.
\end{split}
\end{equation}
Note that we have included the modified pressure $P$ above which is the one that is solved for in \eqref{eq:full_time_disctetization}. \commentA{Dirichlet boundary conditions are imposed for the velocity. The boundary conditions are imposed strongly through the linear system using the routine implemented in FEniCS \citep{fenics2015}.} We perform a convergence study using a series of unstructured meshes. The dynamic viscosity is set to $\mu = 0.01$ and we set $\kappa = 0$. The termination time is set to $T = 1$.

The convergence results are presented in Figure \ref{fig:manufactured} where the $L^2$-error is plotted against the number of degrees of freedom (DOF). All the errors are computed using a high-order quadrature and are relative, \ie they are normalized with their corresponding norm. Overall, the velocity errors converge with the expected convergence rate 4 and all formulations have similar errors. Similarly, the pressure errors converge with the expected rate of 3 with similar errors for all formulations. However, for density, the convergence rate drops to around 3.5 for all formulations except the LSI-EMAC formulation. The reason for this is that the added term in the density update $\alpha_\rho ( \DIV \bu) ( \rho - \Bar{\rho} )$ pollutes the accuracy of $\rho$. If $\bu$ has an error $\mathcal{O}(h^4)$, then $\DIV \bu = \mathcal{O}(h^3)$. This means that a $\mathcal{O}(h^3)$ perturbation has been added to the density update and this explains why density converges with a suboptimal rate of $ \approx 3.5$ instead of 4. Even though the SI-MEDMAC uses $\rho \in \polP_2$, we observe that it converges with the superoptimal rate $\approx 3.5 $. SI-MEDMAC is the most accurate formulation for density even though it uses a lower polynomial degree.

 Another important observation is that the shift-invariant formulations are more accurate than their counterpart, \ie SI-EDMAC is more accurate than EDMAC, and SI-MEMAC is more accurate than MEMAC. More specifically, the error for density is roughly 1 order of magnitude lower for SI compared to the non-SI formulations.

\begin{figure}[H]
\begin{subfigure}{\greshoFigWidth \linewidth}
\centering 
\includegraphics[scale=\errorScale]{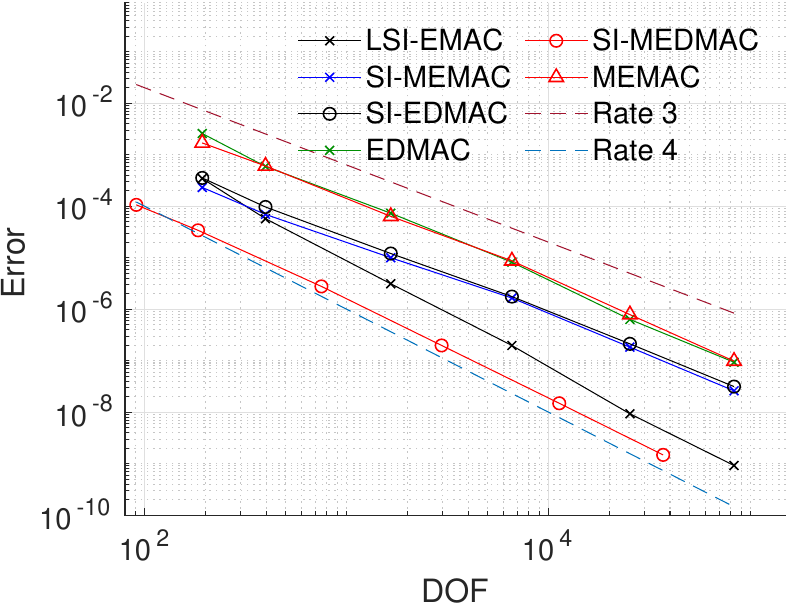}
\caption{Density}
\end{subfigure}%
\begin{subfigure}{\greshoFigWidth \linewidth} 
\centering
\includegraphics[scale=\errorScale]{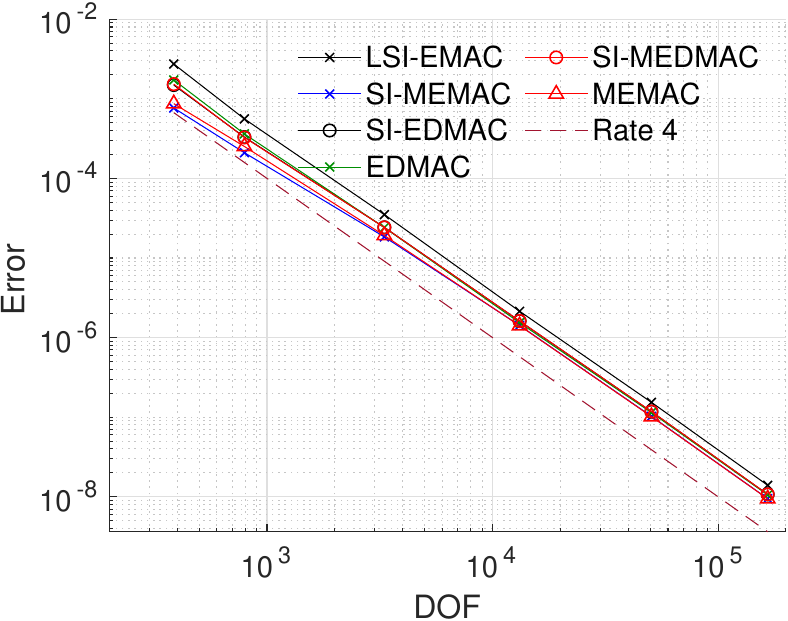}
\caption{Velocity}
\end{subfigure} \\ \centering
\begin{subfigure}{\greshoFigWidth \linewidth}
\centering 
\includegraphics[scale=\errorScale]{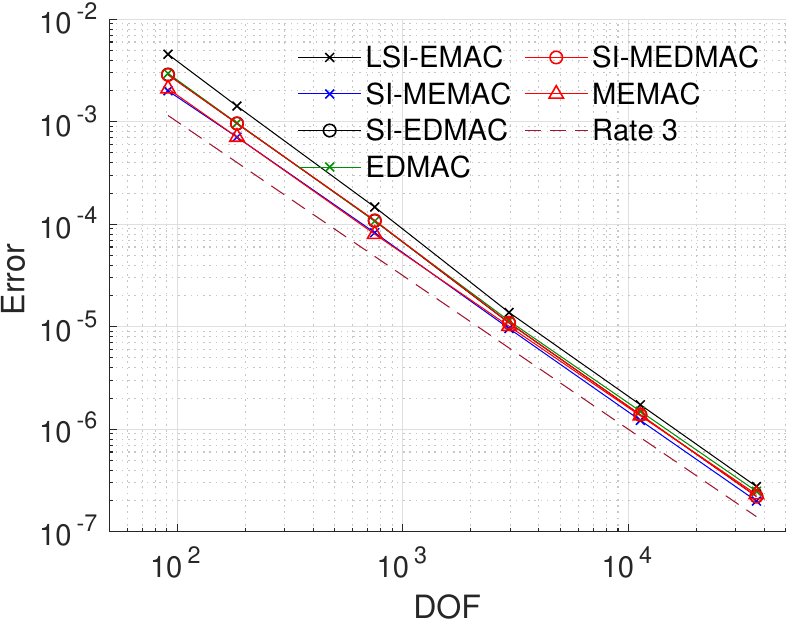}
\caption{Modified pressure}
\end{subfigure}
\caption{Manufactured solution. Relative $L^2$-error versus degrees of freedom. Convergence study for $T=1$, $\mu=0.01$, CFL = 0.025. All formulations used $(\rho, \bu, P)\in\commentA{  \polP_3 \CROSS \polP_3 \CROSS \polP_2 }$, except SI-MEDMAC which used $(\rho, \bu, P)\in\commentA{  \polP_2 \CROSS \polP_3 \CROSS \polP_2 }$. }
\label{fig:manufactured}
\end{figure}

\subsection{Gresho problem with variable density.} \label{Sec:gresho}
In this section, we numerically verify the conserved properties of the formulations under investigation (see Table \ref{table:formulations_of_interest}) for the inviscid case. To this end, we consider the so-called Gresho problem \commentA{on the domain, $\Omega = \{ (x,y) \in (-0.5 , -0.5) \CROSS (0.5, 0.5) \} $. The problem was originally} conceived for constant density incompressible flow \citep{Gresho1990} and we modify the problem so that density is variable and obtain the following exact solution
\begin{equation} \label{eq:gresho} 
\begin{split}
&\rho = 5 + 0.1  \l( 1 - \tanh \l(  \frac{x^2  + y^2}{r_0^2} - 1\r) \r), \\
&r \leq 0.2 : \begin{dcases}
\bu = \begin{pmatrix}
-5y \\ 5x
\end{pmatrix} \\
p = 12.5r^2 + C_1
\end{dcases}, \\
&r > 0.4 : \begin{dcases}
\bu = \begin{pmatrix}
0 \\ 0
\end{pmatrix} \\
p = 0
\end{dcases},
\\
0.2 \leq & r \leq 0.4 : \begin{dcases}
\bu = \begin{pmatrix}
\frac{-2y}{r} + 5y \\
\frac{2x}{r} -5x
\end{pmatrix} \\
p = 12.5r^2 -20r +4 \log(r) + C_2
\end{dcases},
\end{split}
\end{equation}
where $r = \sqrt{x^2 + y^2}$ and we set $r_0 = 1/8$ and
\begin{equation}
C_2 = (-12.5)(0.4)^2 + 20(0.4)^2 - 4 \log(0.4), C_1 = C_2 - 20 (0.2) + 4 \log(0.2).
\end{equation}
The main difficulty with this benchmark is that velocity is non-smooth at $r = 0.2  $ and $r = 0.4$, which will make $\DIV \bu \neq 0$ even when $h \rightarrow 0$. This means that the problem will test if the numerical scheme can conserve the properties considered in this work even when the divergence-free condition is severely violated. To be able to run the simulations longer without code crashes due to positivity issues, the density profile was chosen to be smooth.  It is important to emphasize that a stabilized scheme for the incompressible variable density Euler equations would be able to solve this problem without numerical instabilities. Since the purpose of this test is to verify Table \ref{table:emac_mom} numerically, no stabilization is used.  We use a coarse mesh (4705 $\polP_3$ and 2113 $\polP_2$ nodes). The time step is computed using CFL = 0.5 and the initial time step is used throughout the whole simulation. \commentA{Periodic boundary conditions are enforced. The periodic boundary mapping is done via built-in functions in FEniCS \citep{fenics2015}.}


The results are presented in Figure \ref{fig:gresho} and show that the \commentA{proved} properties agree with the numerical results. The minimum of $\rho$ is computed node-wise and the $L^2$-errors are not normalized. More specifically, we observe that kinetic energy, momentum and angular momentum are conserved for all EMAC formulations. \commentC{There are small violations of momentum and angular momentum conservation. This is due to cancellation errors and because $\alpha_{\uu} = 1$. We note that the errors} are similar to what has been obtained by other researchers for the constant density Gresho problem, \eg \citep[Sec 4.1]{Charnyi2017} and  \citep[Sec 6.2]{Ingimarson2023}. In Figure \ref{fig:gresho}, by `\textit{Squared density*}' we refer to the modified energy of the advection equation given by $  \int_\Omega   \rho^2\ud \bx  - \frac{1}{ | \Omega | } \l(  \int_\Omega \rho  \ud\bx  \r)^2 $.

One important observation is that the shift-invariant property plays a big role, where the shift-invariant formulations are the most robust. For instance, compare SI-MEMAC and SI-EDMAC to MEMAC and EDMAC. The SI-EDMAC and SI-MEDMAC formulations are the most robust and the only formulations that avoided code crash. The SI-MEDMAC formulation is the most accurate for density even though it used fewer degrees of freedom, \ie 2113 $\polP_2$ nodes compared to 4705 $\polP_3$ nodes.




\begin{figure}[H]
\begin{subfigure}{\greshoFigWidth \linewidth}
\centering 
\includegraphics[scale=\greshoScale]{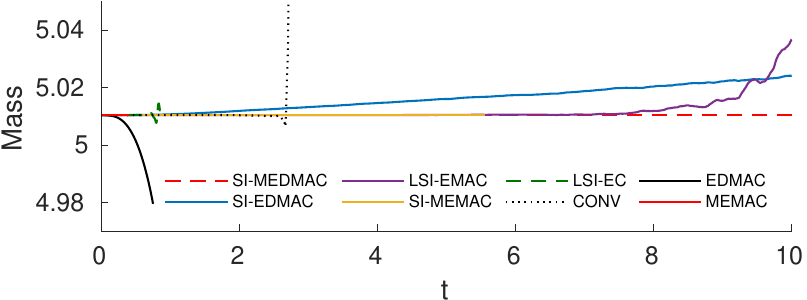}
\end{subfigure}%
\begin{subfigure}{\greshoFigWidth \linewidth} 
\centering
\includegraphics[scale=\greshoScale]{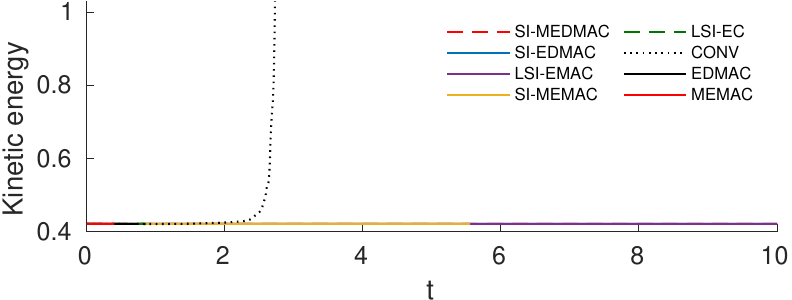}
\end{subfigure} \\
\begin{subfigure}{\greshoFigWidth \linewidth}
\centering 
\includegraphics[scale=\greshoScale]{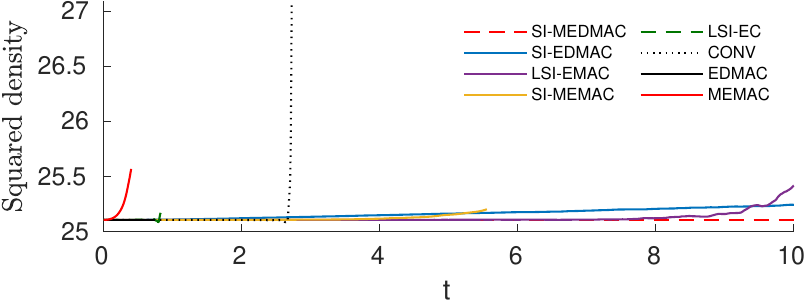}
\end{subfigure}%
\begin{subfigure}{\greshoFigWidth \linewidth} 
\centering
\includegraphics[scale=\greshoScale]{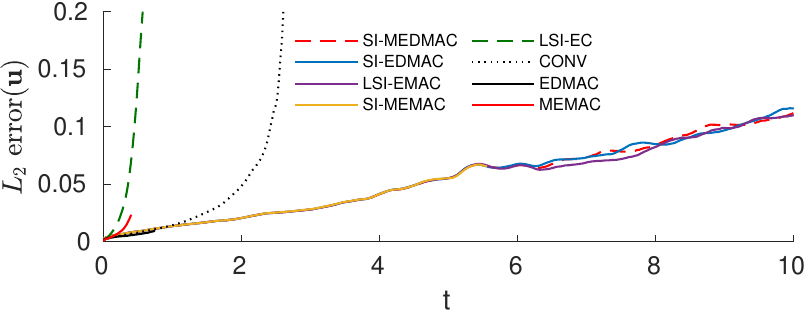}
\end{subfigure} \\
\begin{subfigure}{\greshoFigWidth \linewidth}
\centering 
\includegraphics[scale=\greshoScale]{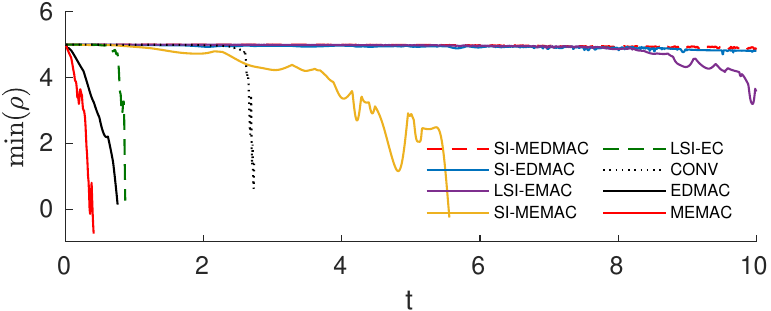}
\end{subfigure}%
\begin{subfigure}{\greshoFigWidth \linewidth} 
\centering
\includegraphics[scale=\greshoScale]{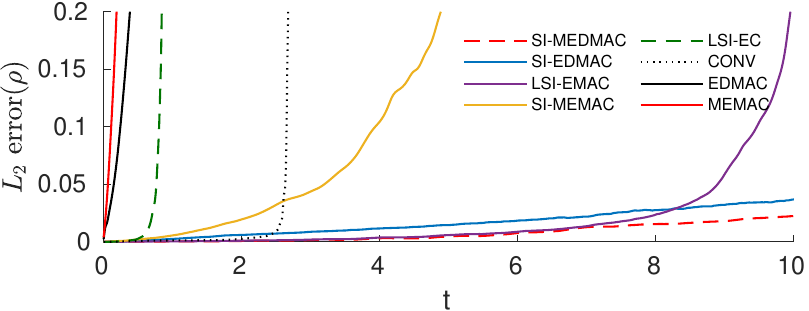}
\end{subfigure} \\
\begin{subfigure}{\greshoFigWidth \linewidth}
\centering 
\includegraphics[scale=\greshoScale]{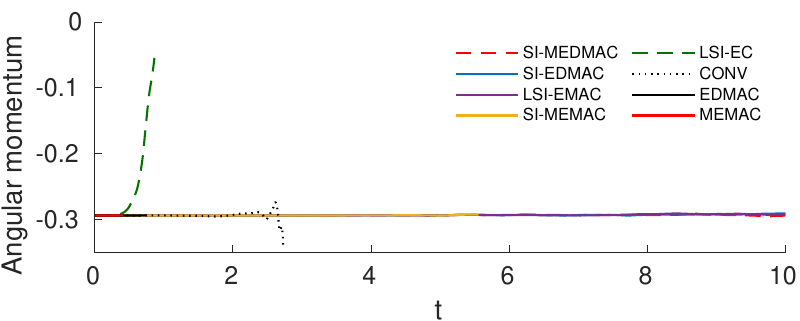}
\end{subfigure}%
\begin{subfigure}{\greshoFigWidth \linewidth} 
\centering
\includegraphics[scale=\greshoScale]{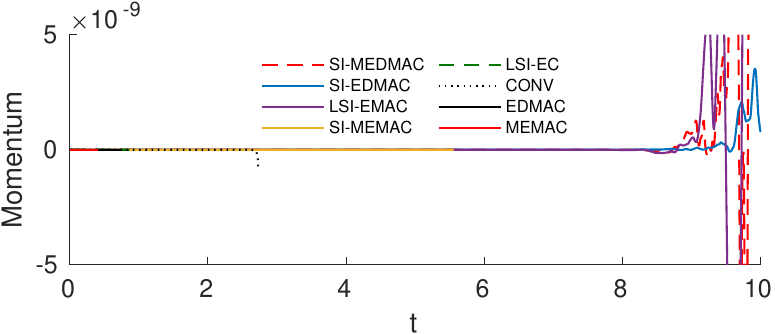}
\end{subfigure}\\ 
\begin{subfigure}{\greshoFigWidth \linewidth} 
\centering
\includegraphics[scale=\greshoScale]{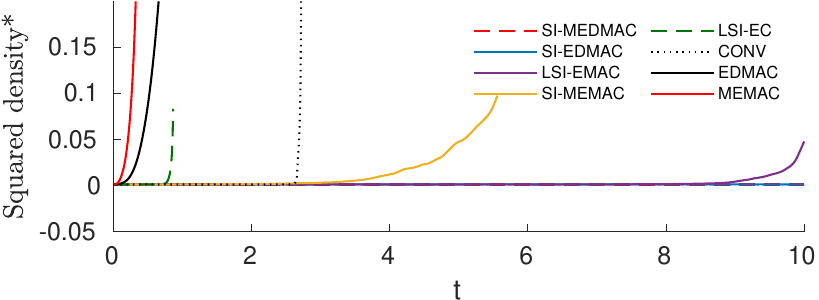}
\end{subfigure}
\begin{subfigure}{ \greshoFigWidth \linewidth} 
 \raggedright \hspace*{50pt} \includegraphics[scale=\legendScale]{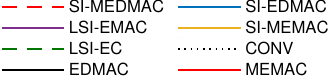}
\end{subfigure}
\caption{\commentC{Gresho problem with variable density and periodic boundary conditions using $\text{CFL} = 0.5$ and 4705 $\polP_3$ nodes. $\kappa = \nu = \mu = 0$. No stabilization is used. All formulations used $(\rho, \bu, P)\in\commentA{  \polP_3 \CROSS \polP_3 \CROSS \polP_2 }$, except SI-MEDMAC which used $(\rho, \bu, P)\in\commentA{  \polP_2 \CROSS \polP_3 \CROSS \polP_2 }$.} }
\label{fig:gresho}
\end{figure}

\subsection{Viscous test.}
In this test, we numerically verify the properties of the viscous regularization. Since we are only interested in the properties of the model and not the discretization, we modify the Gresho problem from Section \ref{Sec:gresho} so that $\bu$ is smooth, meaning that $\DIV \bu \rightarrow 0$ as $h\rightarrow 0$. This means that for a fine mesh, the divergence-free condition will not affect the conserved properties much. The only factor that will meaningfully affect the computed properties is viscous regularization. We consider the following initial condition
\begin{equation} \label{eq:smooth_gresho}
\begin{split}
&\rho = 1 + 0.5  \l( 1 - \tanh \l(  \frac{x^2  + y^2}{r_0^2} - 1\r) \r), \\
& \bu = 0.1  \l( 1 - \tanh \l(  \frac{x^2  + y^2}{r_0^2} - 1 \r) \r) \begin{pmatrix}
-5y \\ 5x
\end{pmatrix},
\end{split}
\end{equation} 
on the domain $\Omega = \{ (x,y) \in (-1.25 , -1.25) \CROSS (1.25, 1.25) \} $  using periodic boundary conditions. The initial condition \eqref{eq:smooth_gresho} is an exact solution iff $\kappa = \mu = 0$. For $\mu \neq 0$ or $\kappa \neq 0$, the analytic solution is not known. We are interested in the behavior of the solution when $\kappa > 0$ and $\mu = 0$ since we want to verify Theorem \ref{theorem:momentum-viscous-flux}. If $\mu = 0$, the first condition of Theorem \ref{theorem:momentum-viscous-flux} is equivalent to kinetic energy conservation. 

We set $r_0 = 0.125$, $\kappa = 0.01$, $\mu = 0$. The time step is computed using CFL = 0.2 and the initial time step is used throughout the whole simulation. We use a fine mesh (73728 $\polP_3$ nodes) so that the discretization error is small since we are only interested in the viscous contribution. Since the discretization error is small, the formulations in Table \ref{table:formulations_of_interest} will produce very similar results. We use the SI-MEDMAC formulation. 

The results are presented in Figure \ref{fig:gresho-smooth} using three different viscous regularizations \eqref{eq:guermond_popov_flux}-\eqref{eq:GP_S}. The results agree with the theory, \ie 

\begin{itemize}
\item $\bef_{GP}$ conserves kinetic energy and momentum but not angular momentum.
\item \new{$\bef_{KS}$} and $\bef_0$ conserve momentum and angular momentum but not kinetic energy.
\end{itemize}




\begin{figure}[H]
\begin{subfigure}{\greshoFigWidth \linewidth}
\centering 
\includegraphics[scale=\greshoScale]{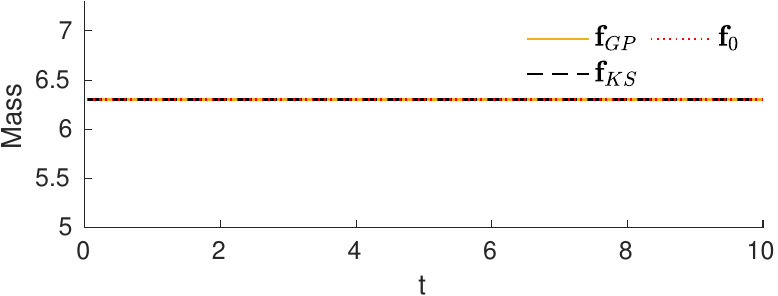}
\end{subfigure}%
\begin{subfigure}{\greshoFigWidth \linewidth} 
\centering
\includegraphics[scale=\greshoScale]{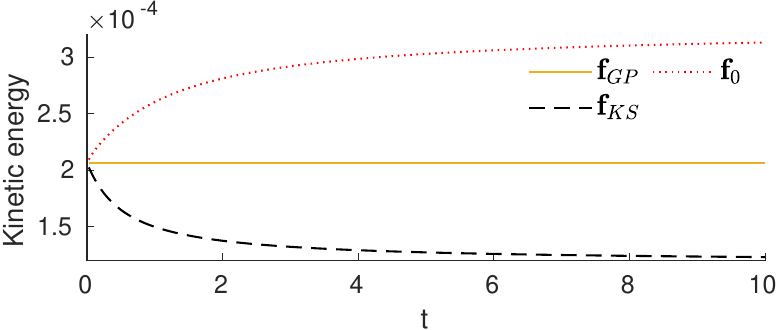}
\end{subfigure} \\
\begin{subfigure}{\greshoFigWidth \linewidth}
\centering 
\includegraphics[scale=\greshoScale]{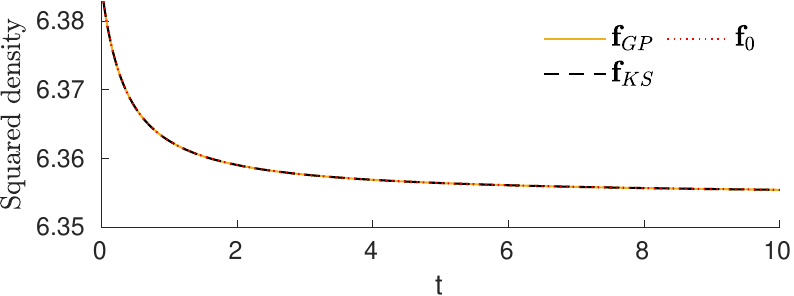}
\end{subfigure}%
\begin{subfigure}{\greshoFigWidth \linewidth} 
\centering
\includegraphics[scale=\greshoScale]{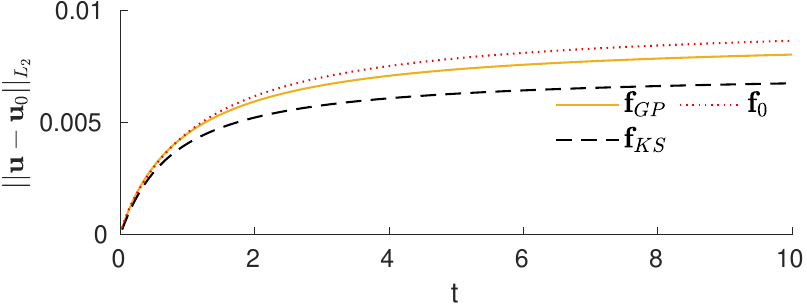}
\end{subfigure} \\
\begin{subfigure}{\greshoFigWidth \linewidth}
\centering 
\includegraphics[scale=\greshoScale]{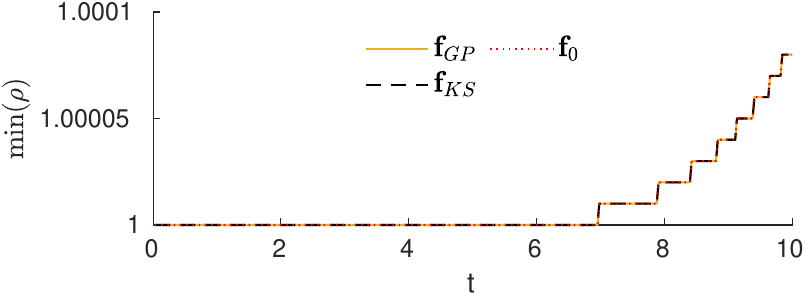}
\end{subfigure}%
\begin{subfigure}{\greshoFigWidth \linewidth} 
\centering
\includegraphics[scale=\greshoScale]{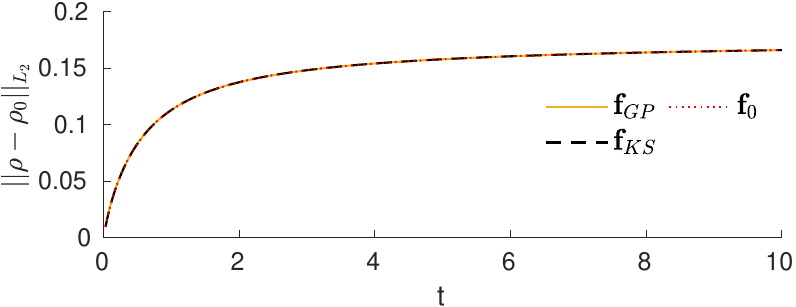}
\end{subfigure} \\
\begin{subfigure}{\greshoFigWidth \linewidth}
\centering 
\includegraphics[scale=\greshoScale]{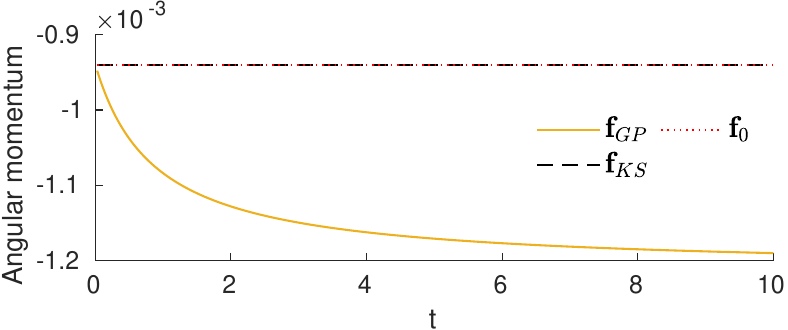}
\end{subfigure}%
\begin{subfigure}{\greshoFigWidth \linewidth} 
\centering
\includegraphics[scale=\greshoScale]{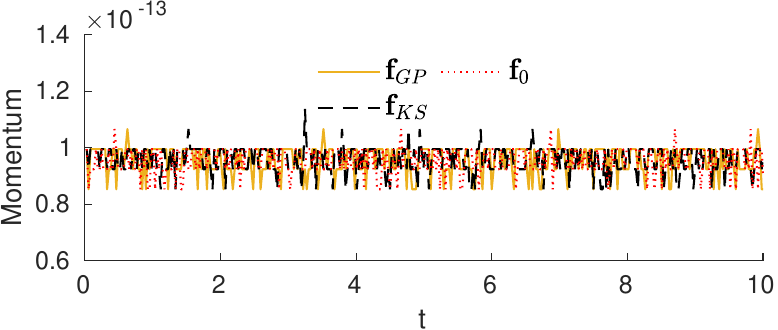}
\end{subfigure}
\caption{\new{Comparison of viscous regularizations \eqref{eq:guermond_popov_flux}-\eqref{eq:GP_S} using $\mu = \nu = 0$, $\kappa = 0.01$ for a smooth variant of the Gresho problem with variable density and periodic boundary conditions. The solution was obtained using $\text{CFL} = 0.2$, 73728 $\polP_3$ nodes and no stabilization was used. }}
\label{fig:gresho-smooth}
\end{figure}

\subsection{Lock-exchange 2D.}

In this section, we compare some different viscous regularizations for a more realistic problem. The problem we consider is the so-called lock-exchange problem which was \commentB{numerically} studied extensively by \cite{Bartholomew_2019} and \cite{Birman_2005}. \commentB{Notably, this configuration has also been the subject of physical experimental exploration by \cite{lowe_rottman_linden_2005}, yielding observations that align with the findings put forth by \citep{Birman_2005}.} The reason why this problem is of interest to us is that it is one of the few case studies in the variable density flow literature which includes mass diffusivity in the model, \ie $\kappa > 0$. The classical setup is a heavy fluid with density $\rho_1$ and a lighter fluid with density $\rho_2$ separated by a vertical barrier. The heavy fluid is located to the left of the barrier and the lighter fluid is to the right. The barrier is removed at $t = 0$ and then the heavy gas moves to the right and the lighter fluid moves to the left. Following the setup from \citep{Bartholomew_2019,Birman_2005}, the computational domain is set to $\Omega = \{ (x,y) \in \commentD{(0, 32L)  \CROSS  (-L/2, L/2)} \} $ with a characteristic velocity scale set to $u = \sqrt{ g( \rho_{1} - \rho_2 )/\rho_1 L }$ and the Reynolds number is defined as $Re = \rho_1 L^{3/2} \sqrt{ g( \rho_{1} - \rho_2 )/\rho_1  }  / \mu$. Following the reference we set $L=1$, $\rho_2 = 0.7$ and $\rho_1 = 1.0$. The Reynolds number is set to $4000$ yielding a constant kinematic viscosity coefficient $\nu$. In summary, we set $\mu = \rho \nu$. The forcing function is set to $\bef = (0, -\rho g)$ with $g= 1$ to yield a downward gravitational force. The initial condition is initially regularized using the error function and is given by
\begin{equation}
\rho^0(\bx) = \frac{1}{2} \l(  \frac{\rho_2}{\rho_1} + 1 \r) - \frac{1}{2} \l( 1 - \frac{\rho_2}{\rho_1} \r) \text{erf}\l(   x_0 \sqrt{Re}  \r),
\end{equation}
where $x_0 = 14$ is the location of the barrier at $t = 0$. Following the reference, the mass diffusivity coefficient is set to $\kappa = \nu$ which means that the so-called Schmidt number which governs the relation of molecular diffusivity and kinematic viscosity is set to 1. \commentA{Also following the reference, we set slip boundary conditions on all boundaries which are imposed strongly through the linear system. A Neumann condition $\bn \SCAL \GRAD \rho|_{\partial \Omega} = 0,$ is automatically imposed weakly on all boundaries through the weak form \eqref{eq:full_time_disctetization}.}

The computed densities are presented in Figures \ref{fig:le-empty} - \ref{fig:le-gp} at times $T = 1, 3, 5, 7, 10$ (at time-scale $\\L^{1/2} \l( g( \rho_{1} - \rho_2 )/\rho_1 \r)^{-1/2}$). The contour lines at the final time step are plotted in Figure \ref{fig:contour}. The results were computed using the SI-MEDMAC formulation using 1851121 $\polP_3$ nodes on a uniform mesh. The parameters for the time-stepping are set to $\text{CFL} = 0.1$, $s_{max} = 1.02$, $s_{min} = 0.98$. The results using the momentum and angular momentum conserving viscous flux $\bef_0$ match with the reference \citep[Fig 4]{Bartholomew_2019} since we use the same viscous flux on a fine mesh. In \ref{Sec:lock exchange 3D} we extend the benchmark to 3D. 


\newcommand \widthLockGlob{0.75}

\newcommand \xWidthlock{ \fpeval{ 0.5 / \widthLockGlob  } }

\newcommand \Widthlockcolorbar{0.091}
\newcommand \Widthlockcolorbarres{\fpeval{1.375*\Widthlockcolorbar}}





\begin{figure}[H] 
\begin{tabular}{cc}
\begin{subfigure}{\widthLockGlob \textwidth } 
  \raggedleft
    \includegraphics[width=\xWidthlock \textwidth]{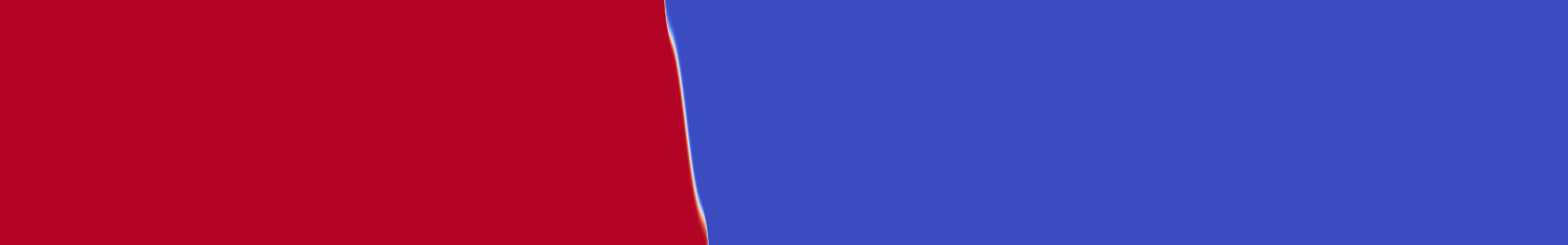} 
    \includegraphics[width=\xWidthlock \textwidth]{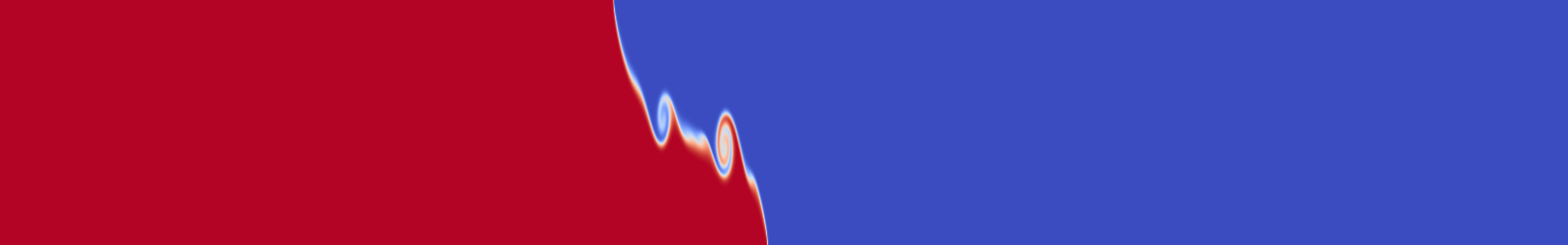} 
    \includegraphics[width=\xWidthlock \textwidth]{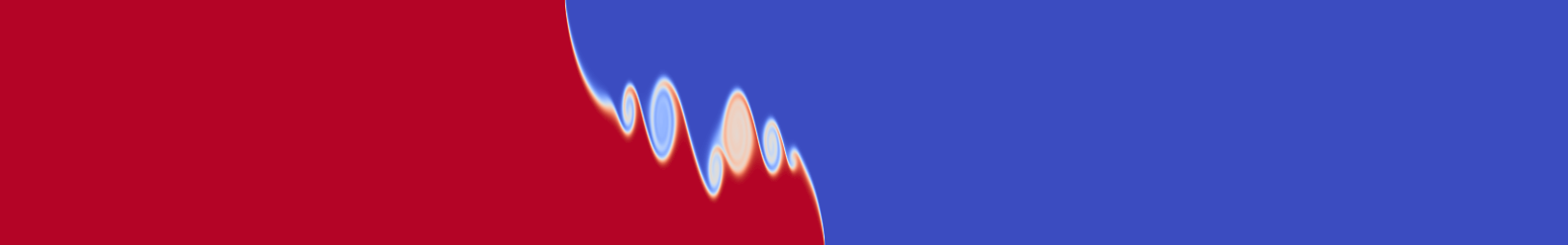} 
    \includegraphics[width=\xWidthlock \textwidth]{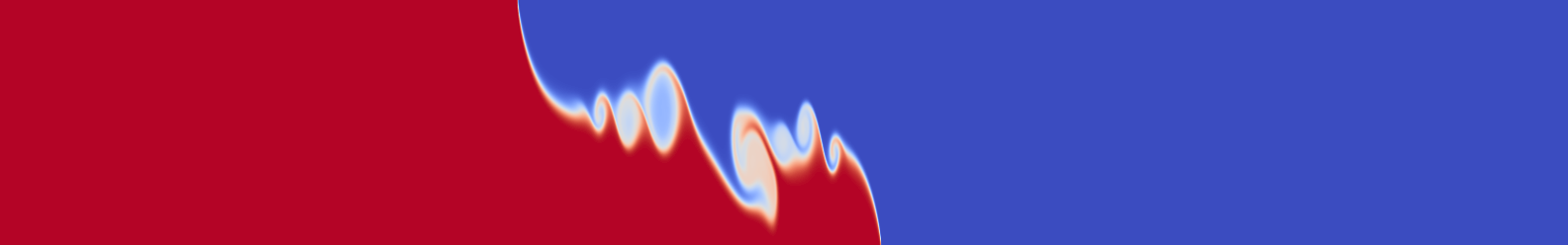} 
    \includegraphics[width=\xWidthlock \textwidth]{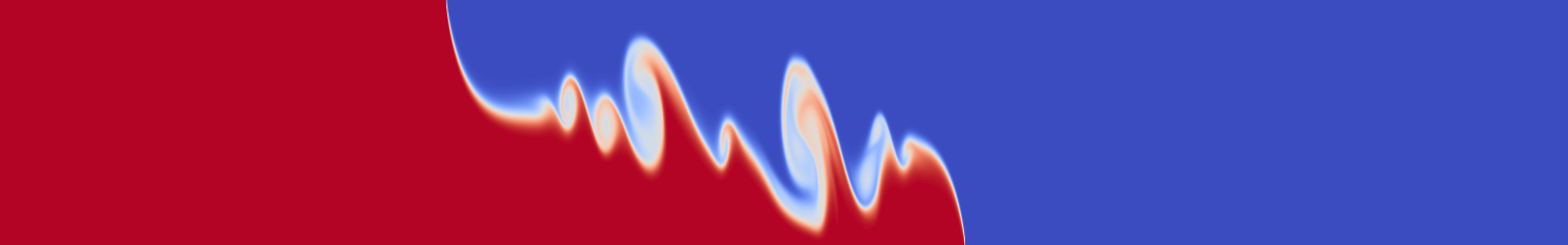} 
    \end{subfigure}
    & 
    \begin{subfigure}{0.5\textwidth}
    \includegraphics[width=\Widthlockcolorbar \textwidth]{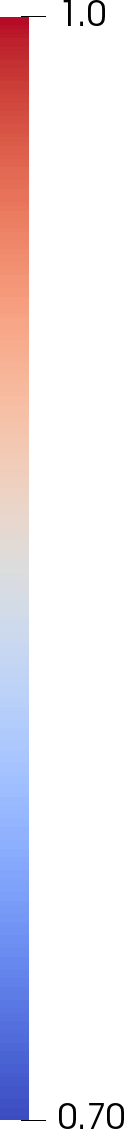} 
    \end{subfigure}
\end{tabular}
\caption{Momentum and angular momentum conserving viscous flux: $\bef_{0}$. Lock-exchange problem at $Re = 4000$, Schmidt number 1 and density ratio $1/0.7$. Density $\rho$ at times $t = 1, 3, 5, 7, 10$.}
\label{fig:le-empty}
\end{figure}

\begin{figure}[H] 
\begin{tabular}{cc}
\begin{subfigure}{\widthLockGlob \textwidth}
  \raggedleft
    \includegraphics[width=\xWidthlock \textwidth]{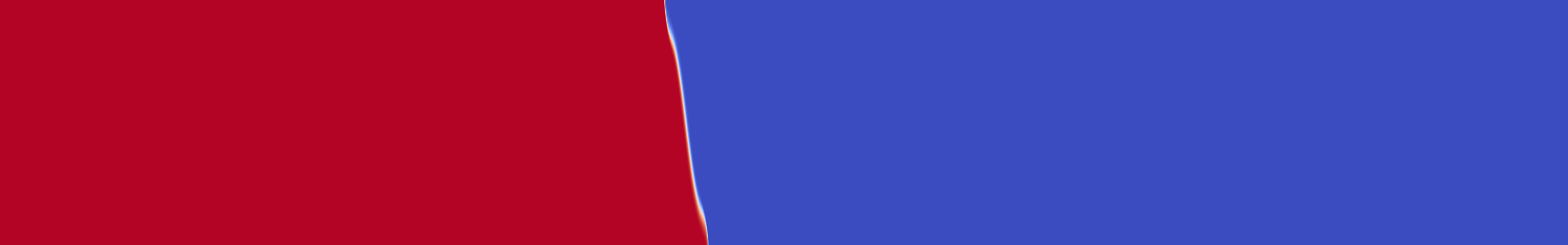} 
    \includegraphics[width=\xWidthlock \textwidth]{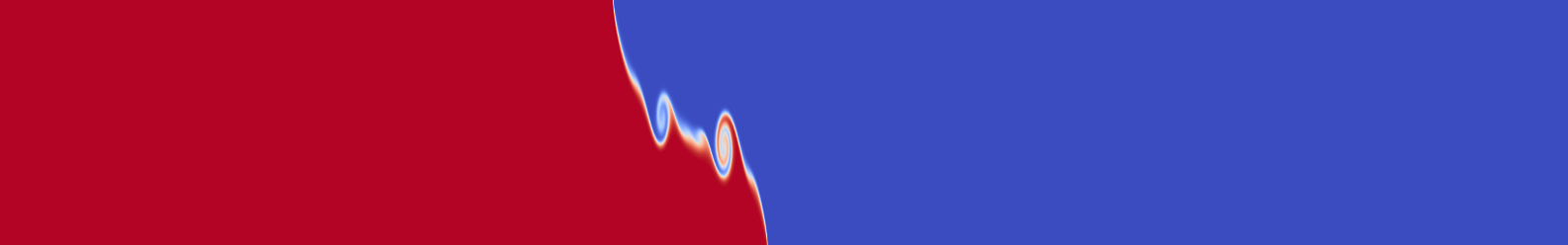} 
    \includegraphics[width=\xWidthlock \textwidth]{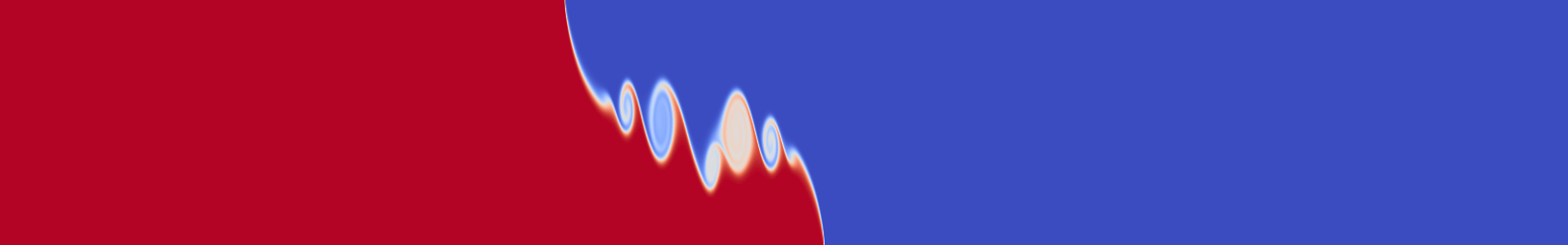} 
    \includegraphics[width=\xWidthlock \textwidth]{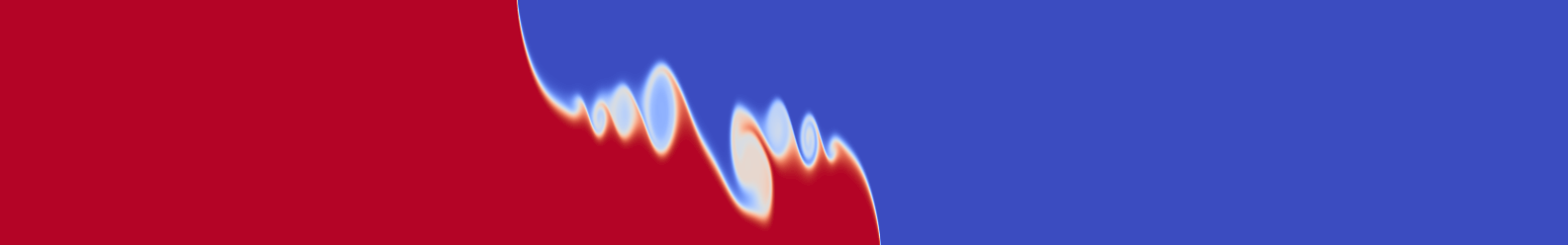} 
    \includegraphics[width=\xWidthlock \textwidth]{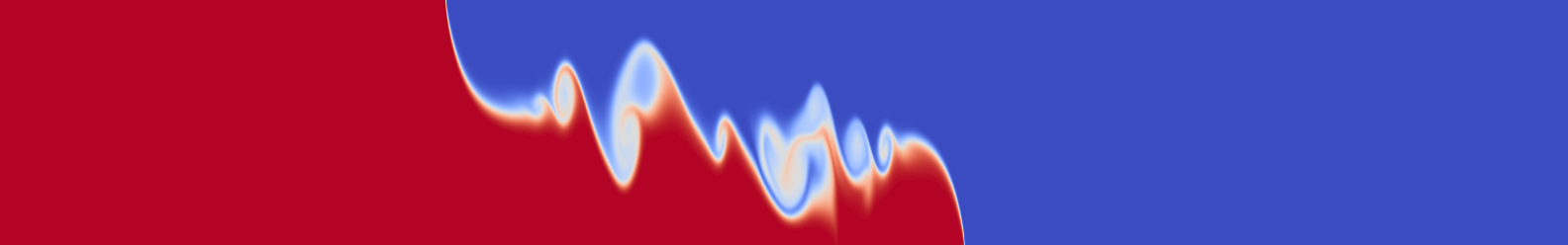} 
    \end{subfigure}
    & \raggedright
    \begin{subfigure}{0.5\textwidth}  
    \includegraphics[width=\Widthlockcolorbar \textwidth]{Figures/lock-exchange/le-n80-no-colorbar.png} 
    \end{subfigure}
\end{tabular}
\caption{Momentum and angular momentum conserving viscous flux: $\bef_{KS}$. Lock-exchange problem at $Re = 4000$, Schmidt number 1 and density ratio $1/0.7$. Density $\rho$ at times $t = 1, 3, 5, 7, 10$.}
\label{fig:le-ang}
\end{figure}

\begin{figure}[H] 
\begin{tabular}{cc}
\begin{subfigure}{\widthLockGlob \textwidth}
  \raggedleft
    \includegraphics[width=\xWidthlock \textwidth]{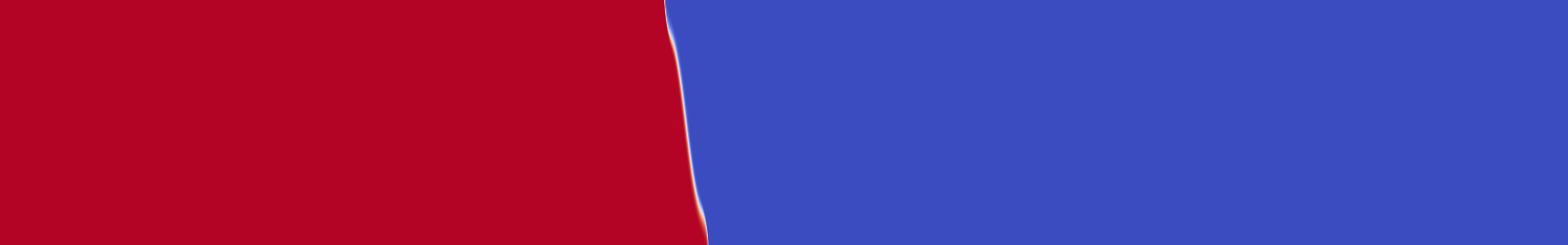} 
    \includegraphics[width=\xWidthlock \textwidth]{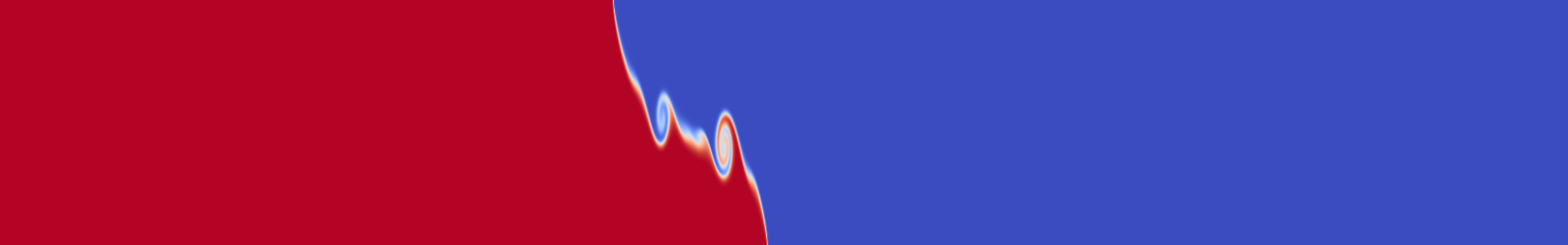} 
    \includegraphics[width=\xWidthlock \textwidth]{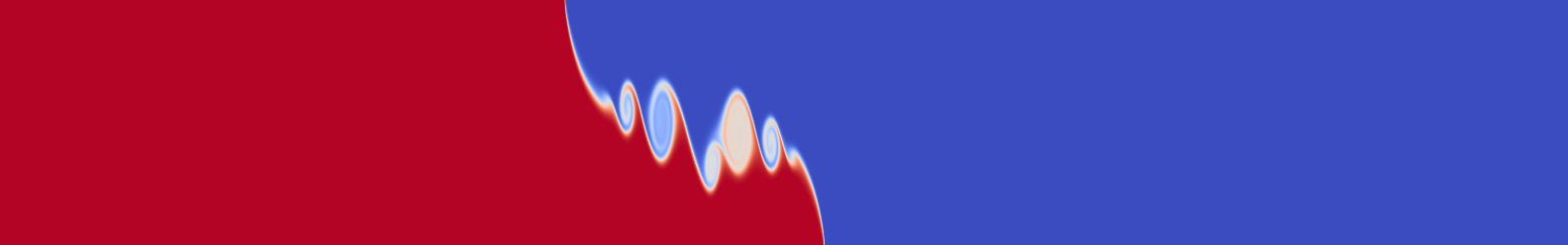} 
    \includegraphics[width=\xWidthlock \textwidth]{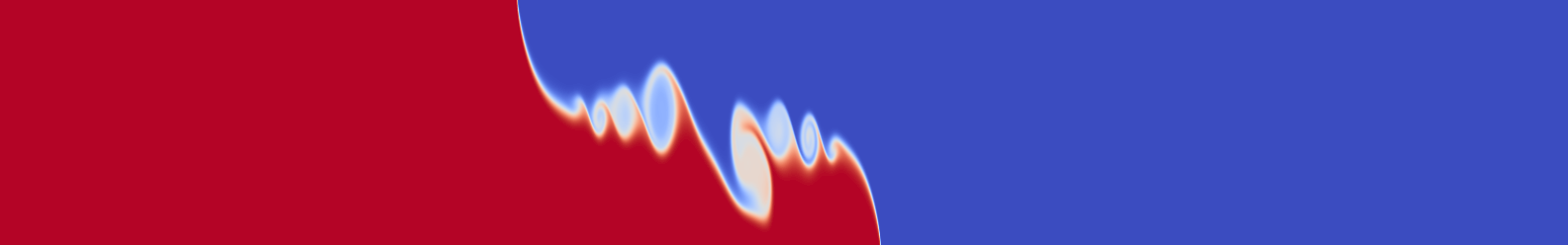} 
    \includegraphics[width=\xWidthlock \textwidth]{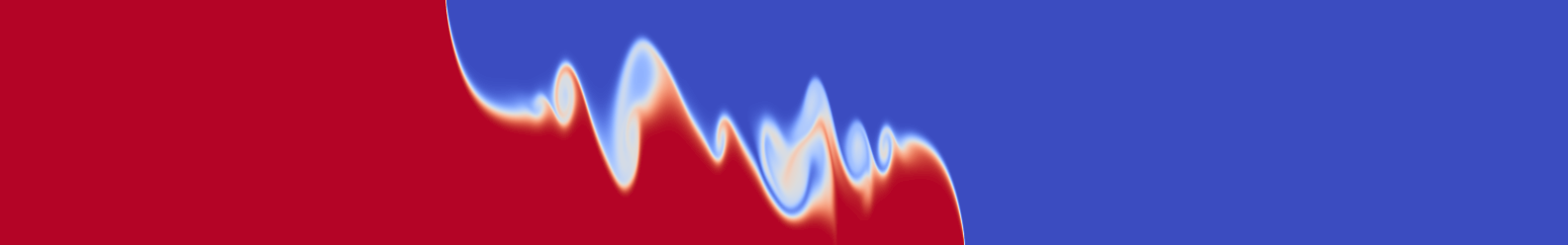} 
    \end{subfigure}
    & \raggedright
    \begin{subfigure}{0.5\textwidth}  
    \includegraphics[width=\Widthlockcolorbar \textwidth]{Figures/lock-exchange/le-n80-no-colorbar.png} 
    \end{subfigure}
\end{tabular}
\caption{Guermond-Popov viscous flux: $\bef_{GP}$. Lock-exchange problem at $Re = 4000$, Schmidt number 1 and density ratio $1/0.7$. Density $\rho$ at times $t = 1, 3, 5, 7, 10$.}
\label{fig:le-gp}
\end{figure}

\newcommand \widthLockGlobC{0.31}
\newcommand \xWidthlockC{ \fpeval{ 0.31 / \widthLockGlobC  } } 

\begin{figure}[H] 
\begin{tabular}{cccc}
\begin{subfigure}{\widthLockGlobC \textwidth}
    \includegraphics[width=\xWidthlockC \textwidth]{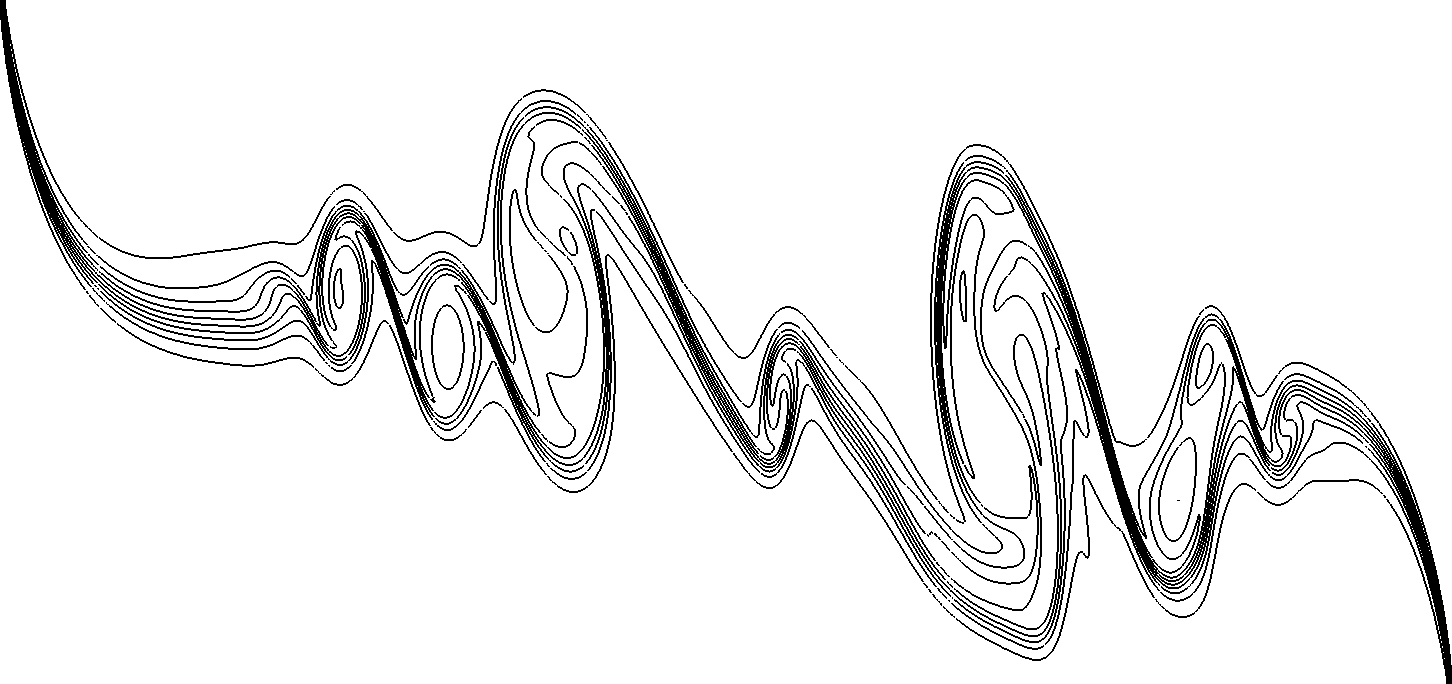} 
    \caption{Momentum and angular momentum conserving viscous flux: $\bef_{0}$.}
    \end{subfigure}
    & 
    \begin{subfigure}{\widthLockGlobC \textwidth}  
    \includegraphics[width=\xWidthlockC \textwidth]{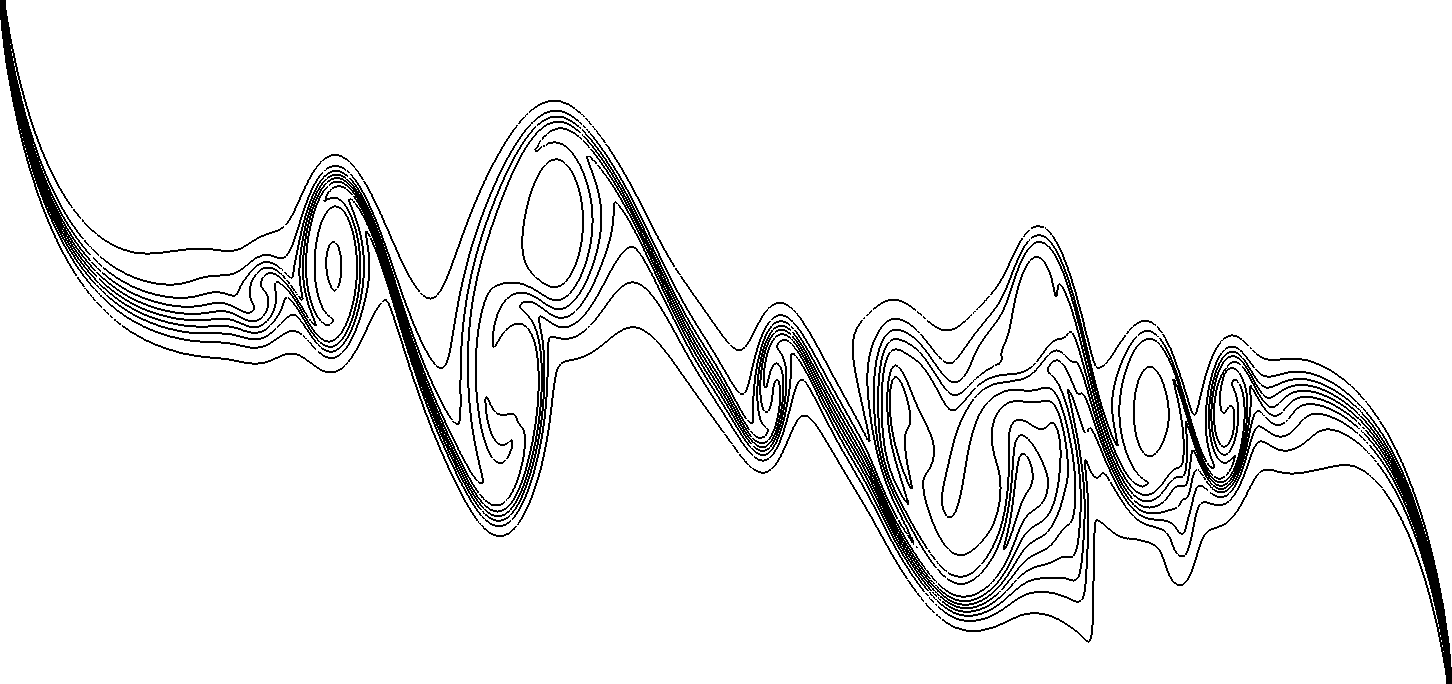} 
    \caption{\new{\cite{Kaz_Smag_1977} viscous model: $\bef_{KS}$.}}
    \end{subfigure}
    & 
    \begin{subfigure}{\widthLockGlobC \textwidth}  
    \includegraphics[width=\xWidthlockC \textwidth]{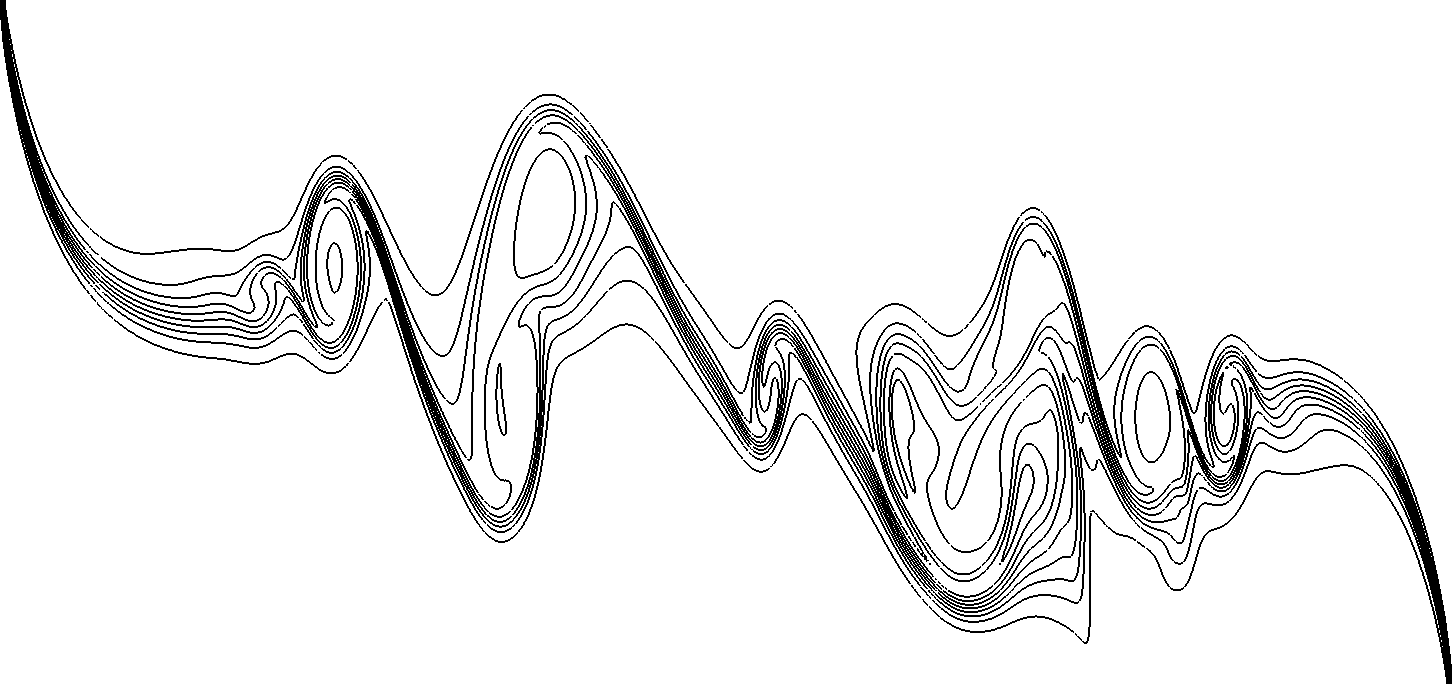} 
    \caption{Guermond-Popov viscous flux: $\bef_{GP}$.}
    \end{subfigure}
\end{tabular}
\caption{Lock-exchange problem at $Re = 4000$, Schmidt number 1 and density ratio $1/0.7$. Contour lines of density at $T = 10$ for three different viscous regularizations.}
\label{fig:contour}
\end{figure}

\section{Conclusion.} \label{Sec:conclusion}
\commentC{By consistently modifying the governing equations, we introduce a new formulation for the incompressible variable density Navier-Stokes equations. When discretized by a Galerkin method, the formulation allows the method to be shift-invariant and mass, squared density, kinetic energy, momentum and angular momentum conserving without the divergence-free constraint being strongly enforced.} The formulation we propose for the inviscid case \eqref{eq:cons_law_primitive} is
\new{\begin{equation}\label{eq:conclusion}
\begin{aligned}
        \p_t \rho + \bu \SCAL \GRAD \rho + \frac{1}{2}(\DIV \bu) (\rho - \Bar{\rho}) & = 0,\\
 \p_t \bbm + \bu \SCAL \GRAD  \bbm +  (\DIV \bu) \bbm  + \GRAD P  +      (\GRAD \bu) \bbm \\    + \frac{1}{2} \l(   \GRAD \rho  (\bu \SCAL \bu)   - \frac{1}{2} \GRAD (  (\rho - \Bar{\rho} ) \bu \SCAL \bu )  \r) 
     &= \bef,  
\end{aligned}
\end{equation}}
where $P = p - \frac{1}{4} \rho \bu \SCAL \bu - \frac{1}{4} \Bar{\rho} \bu \SCAL \bu $ is a modified pressure and $\Bar{\rho} = \frac{1}{| \Omega |} \int_\Omega \rho\ud \bx $. If $\rho = 1$, the formulation simplifies to the EMAC formulation by \cite{Charnyi2017}. \new{One important finding is that by ensuring that the polynomial degree of the mixed FEM formulation satisfies $k_\rho \leq k_P$, mass conservation is automatically achieved. Similarly, by setting $2 k_\rho \leq k_P$, squared density conservation can be automatically achieved.} We numerically compare the new formulation to some previously considered in the literature (see Table \ref{table:formulations_of_interest}). The new formulation performs the best on the benchmarks considered in this manuscript, both in terms of accuracy for smooth problems and in terms of robustness. 

Lastly, we consider the effect of viscous regularizations and our theoretical findings are summarized in Theorem \ref{theorem:momentum-viscous-flux}. One implication of Theorem \ref{theorem:momentum-viscous-flux} is that, if mass diffusion is included in the model, there is no viscous regularization that simultaneously dissipates kinetic energy and conserves angular momentum. Another implication is that a unique viscous regularization dissipates kinetic energy and conserves momentum.




As additional work, one could extend the formulation to more realistic multiphase flow models where surface tension is included such as Cahn-Hilliard or similar. One interesting research direction would be to consider positivity-preserving discretizations that are also shift-invariant. It would also be interesting to investigate which viscous regularization in Section \ref{Sec:viscous_regularization} gives the best match with experimental data from real-world experiments. For underresolved flows, \ie \commentB{$\kappa \ll |\bu| h$ or $\mu \ll \rho |\bu| h$,} we recommend that a stabilization technique \citep{Stiernstrom2021,Lundgren_2023,Nazarov_2013,Nazarov_Hoffman_2013} is used.



\section*{Acknowledgments.}
The computations were enabled by resources in project SNIC 2022/22-428 provided by the Swedish National Infrastructure for Computing (SNIC) at UPPMAX, partially funded by the Swedish Research Council through grant agreement no. 2018-05973. The first author was partly supported by the Center for Interdisciplinary Mathematics, Uppsala University. \new{We extend our sincere gratitude to the anonymous reviewers whose constructive comments greatly increased the quality of this manuscript.}

\appendix

\section{Different formulations of the model problem} \label{Sec:alternative_formulations}

Since it is common to write the governing equations in primitive form, we reiterate the results of this manuscript for the governing equations below 

\begin{equation}\label{eq:nse_primitive_regularized}
      \begin{aligned}
        \p_t \rho + \bu \SCAL \GRAD \rho  & = \DIV   \bef_{\rho}  ,\\
 \rho \p_t \bu + \bbm \SCAL \GRAD  \bu  + \GRAD p  &= \bef + \DIV \l( \mu \l( \GRAD \bu + (\GRAD \bu)^\top \r)  \r)  + \bef_{\bbm} - (\DIV \bef_\rho) \bu , \quad &(\bx,t) \in  \Omega \CROSS (0,T],\\
           \DIV \bu &= 0,\\
           \bu(\bx,0) &= \bu_{0}(\bx), \\
           \rho(\bx,0) &= \rho_{0}(\bx), \quad  &\bx\in \Omega.  
      \end{aligned}
\end{equation}
Similarly, it is also common to write the time-derivative as $\sqrt{\rho} \p_t \l( \sqrt{\rho} \bu \r) $ \citep{Guermond2000} instead of $\p_t \bbm$ which leads to
\begin{equation}\label{eq:nse_sigma_regularized}
      \begin{aligned}
        \p_t \rho + \bu \SCAL \GRAD \rho   =& \DIV   \bef_{\rho}  ,\\
\sqrt{\rho} \p_t \l ( \sqrt{\rho} \bu \r) + \frac{1}{2} \bu \SCAL \GRAD  \bbm + \frac{1}{2} \bbm \SCAL \GRAD  \bu  + \GRAD p  =& \bef + \DIV \l( \mu \l( \GRAD \bu + (\GRAD \bu)^\top \r)  \r) \\  & + \bef_{\bbm} - \frac{1}{2} (\DIV \bef_\rho) \bu , \ &(\bx,t) \in  \Omega \CROSS (0,T],\\
           \DIV \bu =& 0,\\
           \bu(\bx,0) =& \bu_{0}(\bx), \\
           \rho(\bx,0) =& \rho_{0}(\bx),   &\bx\in \Omega.   
      \end{aligned}
\end{equation}
Many Navier-Stokes solvers use these formulations as their starting point and all the theoretical results of this manuscript directly translate to these forms provided that the formulations are modified accordingly. For instance, Theorem \ref{theorem:momentum-viscous-flux} holds for the above formulations since the term $ (\DIV \bef_\rho ) \bu $ is included. Additionally, all the conservativeness given when $\DIV \bu \neq 0$ holds provided the formulations are slightly modified, see the forthcoming sections.

\subsection{Velocity formulation.} 

If solving the problem in primitive form \eqref{eq:nse_primitive_regularized} is preferred we propose the following formulation:  Find $\\(\rho, \bbm, P) \in \commentA{ \calM \CROSS \mathbf{\bcalV} \CROSS \calQ }$ such that 
\begin{equation} \label{eq:velocity_general_formulation}
\begin{alignedat}{2}
( \new{\p_t} \rho , w) + (\bu \SCAL \GRAD \rho, w) + \alpha_\rho ((\DIV \bu) ( \rho - \Bar{\rho} ) , w) + ( \kappa \GRAD \rho , \GRAD w )  &= 0, \quad &&\forall w \in \calM, \\
( \rho  \new{\p_t} \bu, \bv)  + b( \bbm, \bu, \bv ) + \alpha_\bbm( ( \DIV \bu ) \bbm, \bu) + (\GRAD P, \bv) \\  + \l(  \alpha_P + \frac{1}{2} \r) b(\bv,\bbm,\bu) + \l(  \alpha_P  - \frac{1}{2} \r) b(\bv,\bu,\bbm) - \frac{1}{2} \alpha_\rho (   \GRAD( \bbm \SCAL \bu) , \bv ) \\ - \frac{1}{2} \l( (\bv \SCAL \GRAD \rho, \commentC{ \alpha_{\uu} \uuDG + (1 - \alpha_{\uu}) \uu }) -  \alpha_\rho \l(  \GRAD \l( \l(\rho - \Bar{\rho} \r) \commentC{ \l( \alpha_{\uu} \uuDG + (1 - \alpha_{\uu}) \uu \r) } \r) ,\bv  \r)  \r) \\
 +      \alpha_\rho (( \DIV \bu )     \Bar{\rho} \bu , \bv  ) +  \alpha_\rho (    \GRAD \l(    \Bar{\rho} \bu  \SCAL \bu \r) , \bv  )   
 + \l( \mu \l(   \GRAD \bu   + (\GRAD \bu)^\top  \r)  , \GRAD \bv \r)  \\ 
  -( \bef, \bv)  - ( \bef_m, \bv ) -  b( \kappa \GRAD \rho , \bu , \bv  ) -  b( \kappa \GRAD \rho , \bv , \bu  ) &= 0,    \quad &&\forall \bv \commentA{\in} \bcalV, \\
(\DIV \bu, q)&= 0, \quad&& \forall q \in \calQ,
\end{alignedat}
\end{equation}
where $P = p - \alpha_P \rho \bu \SCAL \bu - \frac{1}{2} \alpha_\rho \Bar{\rho} \bu \SCAL \bu $. Note that integration by parts was performed on $(\DIV \bef_\rho) \bu$ inside \eqref{eq:nse_primitive_regularized}. \commentC{ The conditions for kinetic energy and momentum conservation of \eqref{eq:velocity_general_formulation} are summarized in Tables \ref{table:emac_part1}, \ref{table:emac_part2} and \ref{table:emac_part3}. The remaining properties related to density are given in Table \ref{table:emac_mom}}.


\subsection{Fully discrete approximation.}
Similar to Section \ref{Sec:fully_discrete} a modified Crank-Nicolson method can be applied on \eqref{eq:velocity_general_formulation}. $\rho \new{\p_t} \bu$ is discretized as $\frac{\rho^{n+1/2} \l( \bu^{n+1} -  \bu^{n} \r)}{\Delta t_n}$ and $\uuDG$ is discretized as $ \frac{1}{2} ( \overline{\bu^n \SCAL \bu^n} + \overline{ \bu^{n+1} \SCAL \bu^{n+1}} ) $. 

\subsection{Mixed formulation.} 
If solving the problem in the mixed form \eqref{eq:nse_sigma_regularized} is preferred we propose the following formulation: Find $(\rho, \bbm, P) \in \commentA{ \calM \CROSS \mathbf{\bcalV} \CROSS \calQ }$ such that

\begin{equation} \label{eq:sigma_general_formulation}
  \begin{alignedat}{2}
( \new{\p_t} \rho , w) + (\bu \SCAL \GRAD \rho, w) + \alpha_\rho ((\DIV \bu) ( \rho - \Bar{\rho} ) , w)  &= 0, \quad &&\forall w \in \calM, \\
( \sqrt{\rho} \new{\p_t} (\sqrt{\rho} \bu ), \bv)   + \frac{1}{2} (  b( \bbm, \bu, \bv ) +  b( \bu, \bbm, \bv )) + \alpha_\bbm( ( \DIV \bu ) \bbm, \bu) + (\GRAD P, \bv) \\+ \alpha_P b(\bv,\bbm,\bu) + \alpha_P b(\bv,\bu,\bbm) + \frac{1}{2} \alpha_\rho ( ( \DIV \bu ) \bar{\rho} \bu, \bv) +  \frac{1}{2} \alpha_\rho (    \GRAD \l( \bar{\rho} \bu \SCAL \bu \r) , \bv ) \\ 
 + \l( \mu \l(   \GRAD \bu   + (\GRAD \bu)^\top  \r)  , \GRAD \bv \r)  \\ 
  = ( \bef, \bv) + ( \bef_m, \bv ) + \frac{1}{2} \l(  b( \kappa \GRAD \rho , \bu , \bv  ) + b( \kappa \GRAD \rho , \bv , \bu  )  \r)  &= 0, \quad&& \forall \bv \commentA{\in} \bcalV, \\
(\DIV \bu, q) &= 0,\quad && \forall q \in \calQ,
\end{alignedat}
\end{equation}

where $P = p - \alpha_P \bbm \SCAL \bu - \frac{1}{2} \alpha_\rho \Bar{\rho} \bu \SCAL \bu $. Note that integration by parts was performed on $(\DIV \bef_\rho) \bu$ inside \eqref{eq:nse_primitive_regularized}. \commentC{ The conditions for kinetic energy and momentum conservation of \eqref{eq:sigma_general_formulation} are summarized in Tables \ref{table:emac_part1}, \ref{table:emac_part2} and \ref{table:emac_part3}. The remaining properties related to density are given in Table \ref{table:emac_mom}}.


\subsection{Fully discrete approximation}



Similar to Section \ref{Sec:fully_discrete} the standard Crank-Nicolson method can be applied on \eqref{eq:sigma_general_formulation}. $\sqrt{\rho} \new{\p_t} (\sqrt{\rho} \bu )$ is discretized as $\rho^{n+1/2} \frac{\bu^{n+1}-\bu^n}{\Delta t_n} + \frac{1}{2} \bu^{n+1/2} \frac{ \rho^{n+1} - \rho^n }{\Delta t_n}$ and this leads to all properties at the fully discrete level except kinetic energy.

\begin{table}[H]
  \centering
  \caption{\commentC{Conditions for semi-discrete kinetic energy conservation for the inviscid model problem. Note that the conditions on $\alpha_\rho, \alpha_\bbm, \alpha_P$ are different for the different formulations.}}
  \begin{tabular}{|c|c|}
  \hline
   & $ \p_t \int_\Omega \rho \bu \SCAL \bu \ud \bx = 0$ \\ \hline
  $ \new{\p_t} \bbm \ \eqref{eq:mom_update_general}  $ & \commentC{$\alpha_{\uu} = 1$ and} $\alpha_\bbm -\alpha_P - \alpha_\rho/2 = 1/2$ \\
  $ \sqrt{\rho} \new{\p_t} (\sqrt{\rho} \bu ) $ \eqref{eq:sigma_general_formulation}  & $\alpha_\bbm -\alpha_P  = 1/2$ \\
  $ \rho \new{\p_t} \bu$ \eqref{eq:velocity_general_formulation}  & \commentC{$\alpha_{\uu} = 1$ and} $\alpha_\bbm -\alpha_P + \alpha_\rho/2 = 1/2$ \\
  \hline 
  \end{tabular}
  \label{table:emac_part1}
  \end{table}
  
  \begin{table}[H]
  \centering
  \caption{\commentC{Conditions for semi-discrete momentum conservation for the inviscid model problem. Note that the conditions on $\alpha_\rho, \alpha_\bbm, \alpha_P$ are different for the different formulations. Also note that the two latter formulations require setting $w = \bu \SCAL \be_i$ when \neww{proving} momentum conservation, which means that we require $k_\bu \leq k_\rho$.}}
  \begin{tabular}{|c|c|}
  \hline
   & $\p_t \int_\Omega \bbm \ud \bx = 0$ \\ \hline
  $ \new{\p_t} \bbm \ \eqref{eq:mom_update_general}  $ & \commentC{$\alpha_{\uu} = 0$ and} $\alpha_\bbm = 1$ \\
  $ \sqrt{\rho} \new{\p_t} (\sqrt{\rho} \bu ) $ \eqref{eq:sigma_general_formulation}  & \commentC{$k_\bu \leq k_\rho$ and} $\alpha_\rho/2 + \alpha_\bbm = 1$ \\
  $ \rho \new{\p_t} \bu$ \eqref{eq:velocity_general_formulation}  & \commentC{$\alpha_{\uu} = 0, k_\bu \leq k_\rho$ and} $\alpha_\rho + \alpha_\bbm = 1$ \\
  \hline 
  \end{tabular}
  \label{table:emac_part2}
  \end{table}
  
  \begin{table}[H]
  \centering
  \caption{\commentC{Conditions for semi-discrete angular momentum conservation for the inviscid model problem. Note that the conditions on $\alpha_\rho, \alpha_\bbm, \alpha_P$ are different for the different formulations. Also note that the two latter formulations require setting $w = \bu \SCAL \bphi_i$ when \neww{proving} angular momentum conservation, which means that we require $(k_\bu + 1) \leq k_\rho$.}}
  \begin{tabular}{|c|c|}
  \hline
   & $ \p_t \int_\Omega \bbm \CROSS \bx \ud \bx = 0$ \\ \hline
  $ \new{\p_t} \bbm \eqref{eq:mom_update_general}  $ & \commentC{$\alpha_{\uu} = 0$ and} $\alpha_\bbm = 1$ \\
  $ \sqrt{\rho} \new{\p_t} (\sqrt{\rho} \bu ) $ \eqref{eq:sigma_general_formulation}  & \commentC{$(k_\bu + 1) \leq k_\rho$ and} $\alpha_\rho/2 + \alpha_\bbm = 1$ \\
  $ \rho \new{\p_t} \bu$ \eqref{eq:velocity_general_formulation}  & \commentC{$\alpha_{\uu} = 0, (k_\bu + 1)\leq k_\rho$ and} $\alpha_\rho + \alpha_\bbm = 1$ \\
  \hline 
  \end{tabular}
  \label{table:emac_part3}
  \end{table}

\section{Lock-exchange 3D.} \label{Sec:lock exchange 3D}

In this section, we consider the lock-exchange problem in 3D. Even though the theory is developed for the Crank-Nicolson scheme, one can use the viscous regularization and formulations developed in this work with different time-integration schemes such as high-order (backward differentiation formula) BDF methods. To this end, we consider the variable time-step BDF4 method from \citep{Lundgren_2023} which we modify to be fully implicit.

We follow the setup of \cite{Bartholomew_2019}. The setup is similar to the 2D case with a few exceptions. No-slip boundary conditions are used on the top and bottom boundaries and slip is used on the remaining boundaries. \commentA{These boundary conditions are imposed strongly through the linear systems. A Neumann condition $\bn \SCAL \GRAD \rho|_{\partial \Omega} = 0,$ is automatically imposed weakly on all boundaries through the weak form \eqref{eq:full_time_disctetization}.} The Reynolds number is computed in the same as in 2D, but now it is set to $Re = 2236$ instead. The domain is set to $\Omega = \{ (x,y \commentD{, z} ) \in (-1L, 17L) \CROSS (0, 2L) \CROSS (0, 2L) \}$. 

A random perturbation is added to the initial velocity
\begin{equation}
\bu(0, \bx) = X \cdot 0.05 \cdot \exp{(-25 x^2)} + (0 , 0, 0)^\top,
\end{equation}
where $X \sim U(-1,1)$, a uniform distribution of random numbers in $(-1,1)$. The authors of \citep{Bartholomew_2019} added this perturbation `to simulate that a physical barrier is removed'.

We compute the solution using 15,696,120 $\polP_3$ nodes on a uniform mesh which corresponds to a similar mesh resolution as \cite{Bartholomew_2019}. We also set $\text{CFL} = 0.1$ and use the LSI-EMAC formulation using $\commentA{  \polP_3 \CROSS \polP_3 \CROSS \polP_2 }$ elements. The nonlinear system is solved using the relative tolerance $10^{-7}$. The results are presented at $T = 20$ in Figure \ref{fig:le-3d}. In the figure, we compare the Guermond-Popov viscous flux ($\bef_{GP}$) and the momentum and angular momentum conserving flux that \cite{Bartholomew_2019} use ($\bef_0$). The simulations closely match \cite{Bartholomew_2019}, but there is no pointwise match due to the random perturbations in the initial condition and differences in the numerical method. 


\begin{figure}[H]
\centering 
\includegraphics[scale=0.38]{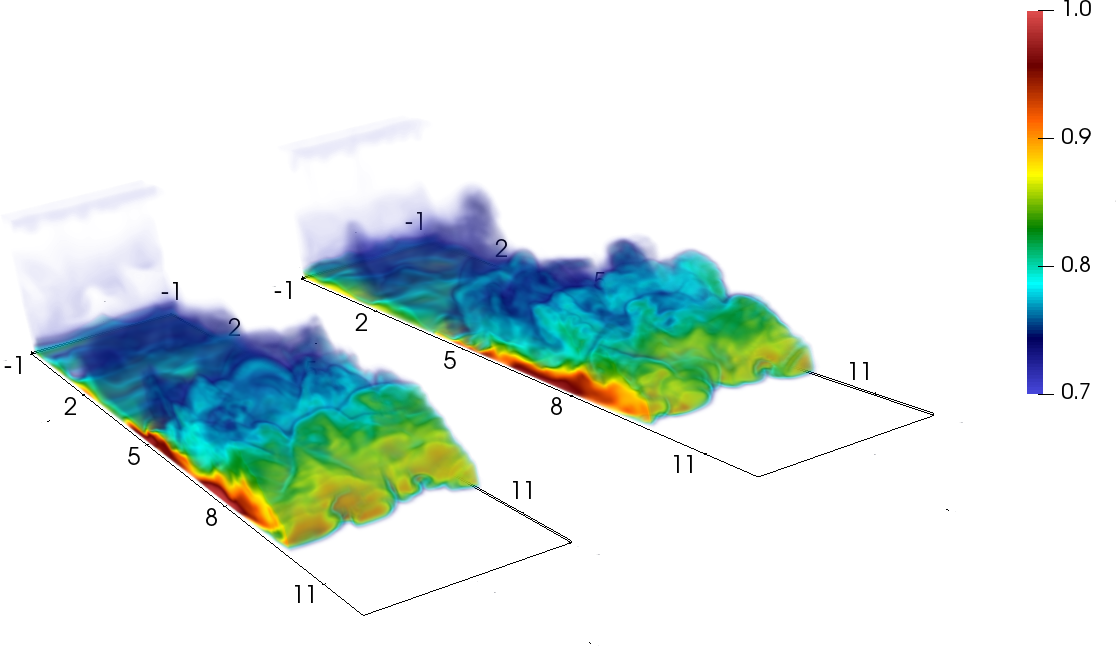}
\caption{Lock-exchange problem in 3D at $Re = 2236$ and Schmidt number 1. Density $\rho$ at $T = 20$. Left: Momentum and angular momentum conserving viscous flux $\bef_0$. Right: Guermond-Popov viscous flux $\bef_{GP}$.}
\label{fig:le-3d}
\end{figure}



    








\bibliographystyle{abbrvnat} 
\bibliography{ref}


\end{document}